\def\Ls#1{{\text{\sc Link}(#1)}}
\def\inmod#1{\allowbreak\mkern5mu({\operator@font mod}\,\,#1)}
\def\sphere{{{\mathbb S}^2}}
\DeclarePairedDelimiterX{\infdivx}[2]{(}{)}{%
  #1\;\delimsize\|\;#2%
}
\renewcommand{\descriptionlabel}[1]%
  {{\hglue -0.7 cm}\hspace{\labelsep}#1}
\newcommand*{\Scale}[2][4]{\scalebox{#1}{\ensuremath{#2}}}%
\def\tf#1{{\Scale[2.4]{#1}}}
\def\tz#1{{\Scale[2.0]{#1}}}
\def\te#1{{\Scale[2.8]{#1}}}
\def\ignore#1{{ }}
\definecolor{mygray}{gray}{0.8}
\def\cc{{\mathscr C}}
\def\myfrac#1#2{{\genfrac{}{}{0pt}{}{#1}{#2}}}
\def\cir#1#2#3{{\cc{\hbox{\hglue #1 cm}\myfrac{#2}{#3}}}}
\def\centerarc[#1](#2)(#3:#4:#5){ \draw[#1] ($(#2)+({#5*cos(#3)},{#5*sin(#3)})$) arc (#3:#4:#5); } 
\def\conethree{\cir{-0.04}{1}{3}}
\def\cthreefive{\cir{-0.02}{3}{5}}
\def\c13{{\conethree}}
\def\c35{{\cthreefive}}
\def\oP{{\overline{P}}}
\def\oD{{\overline{D}}}
\def\oB{{\overline{B}}}
\newtheorem{theorem}{Theorem} %Defines \begin{theorem} to write "Theorem"
\newtheorem{theorem*}{Theorem} %Defines \begin{theorem} to write "Theorem"
\newtheorem{proposition}[theorem]{Proposition} %square bracket tells it to count props with theorems
\newtheorem{claim}[theorem]{Claim}
\newtheorem{lemma}[theorem]{Lemma}
\newtheorem{observation}[theorem]{Observation}
\begin{document}

%\linenumbers  %% This enables line numbers in the whole document.

% *****************************************************************************

\title[Regular projections of the link $L6n1$]{Regular projections of the link $L6n1$}

% *****************************************************************************

\author[Alba]{Andrea Alba}
\address{Instituto de F\'\i sica, Universidad Aut\'onoma de San Luis Potos\'{\i}, SLP 78000, Mexico}
\email{\tt andrea.casillas@if.uaslp.mx}

\author[Ram\'\i rez]{Santino Ram\'\i rez}
\address{Instituto de F\'\i sica, Universidad Aut\'onoma de San Luis Potos\'{\i}, SLP 78000, Mexico}
\email{\tt santinormz@if.uaslp.mx}

\author[Salazar]{Gelasio Salazar}
\address{Instituto de F\'\i sica, Universidad Aut\'onoma de San Luis Potos\'{\i}, SLP 78000, Mexico}
\email{\tt gsalazar@ifisica.uaslp.mx}

% *****************************************************************************

\subjclass[2020]{Primary 57K10; Secondary 57M15, 05C10}

\date{\today}
\def\W#1{{\widehat{#1}}}
\def\solv#1#2{{{#1}\rightarrowtail{#2}}}

% \and Author3\footnotemark[2]}

\begin{abstract}
Given a link projection $P$ and a link $L$, it is natural to ask whether it is possible that $P$ is a projection of $L$. Taniyama answered this question for the cases in which $L$ is a prime knot or link with crossing number at most five. Recently, Takimura settled the issue for the knot $6_2$. We answer this question for the case in which $L$ is the link $L6n1$.
\end{abstract}

\maketitle

\section{Introduction}\label{sec:intro}

We work in the piecewise linear category, and links are hosted in the $3$-sphere ${\mathbb S}^3$. If we project a link $L$ onto the $2$-sphere $\sphere$, we obtain a {\em projection} of $L$. We {\em resolve} a link projection by giving over/under information at each crossing, thus obtaining a link {\em diagram}. 

As usual, we are interested in {regular} projections of links, such as the projection $P$ in Figure~\ref{fig:100}. We recall that a projection is {\em regular} if it has finitely many multiple points, and each multiple point is a transverse (non-tangential) double point. Two projections are {\em equivalent} if there is a self-homeomorphism of $\sphere$ that takes one to the other.

Let $P$ be a projection and let $L$ be a link. We do not distinguish between isotopic links, and so $P$ {\em is a projection of $L$} if there is a link isotopic to $L$ that projects to $P$. Thus $P$ is a projection of $L$ if and only if it is possible to resolve $P$ to obtain a diagram $D$ of a link isotopic to $L$. See Figure~\ref{fig:100} for an illustration. %For instance, in Figure~\ref{fig:100} we have a projection $P$. We resolve the crossings of $P$ as shown, to obtain the diagram $D$ in the middle of the figure. Using a sequence of Reidemeister moves it is easy to see that $D$ is a diagram of the link $L6n1$, illustrated on the right hand side. This shows that $P$ is a projection of $L6n1$.

% *********************************************************
\begin{figure}[ht!]
\centering
\scalebox{0.25}{\input{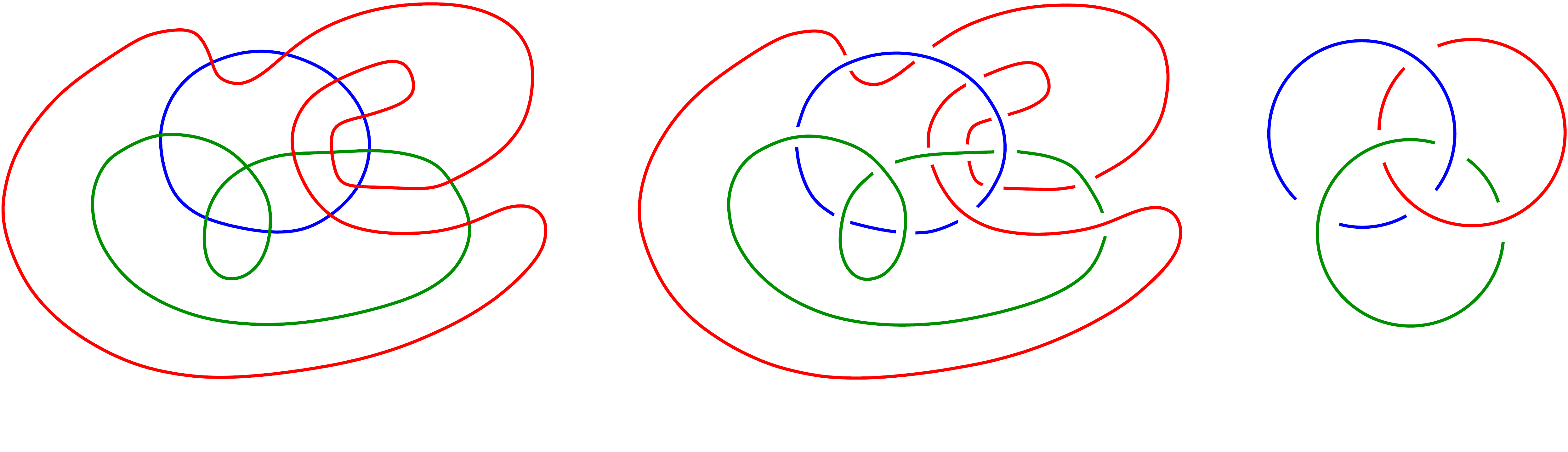_t}}
\caption{On the left-hand side we have a projection $P$ of a $3$-component link. We turn $P$ into the diagram $D$ at the center of the figure by giving over/under information at each crossing, that is, by {\em resolving} each crossing of $P$. Using a sequence of Reidemeister moves, it is possible to transform $D$ into the diagram on the right-hand side, which is a diagram of the link $L6n1$. Therefore $P$ is a projection of $L6n1$.}
\label{fig:100}
\end{figure}
% *********************************************************

Given a projection $P$ and a link $L$, it is natural to ask whether $P$ is a projection of $L$. This question and several variants have been thoroughly investigated in the literature~\cite{cantarella,endoitoh,evenzohar,hanaki2009,hanaki2010,hanaki2014,hanaki2015,hanaki2020,huhtaniyama,itotakimura,medina1,smooth,ptaniyama}. Taniyama answered this question for all the prime links (including knots) with crossing number at most five~\cite{taniyamaknots,taniyamalinks}. Recently, Takimura settled the issue for the knot $6_2$~\cite{takimura2018}.

\subsection{Our main result} In this paper we answer this question for the case in which $L$ is a $3$-component prime link with crossing number six, namely the link $L6n1$ in the Thistlethwaite Link Table~\cite{atlas}. We refer the reader again to the right-hand side of Figure~\ref{fig:100}. % for a diagram of $L6n1$.

In order for a projection $P$ to be a projection of $L6n1$, an obvious requirement is that $P$ be a projection of a $3$-component link. Every such projection $P$ is the union of three knot projections, which we colour arbitrarily so that we have a {\em blue} knot projection $B$, a {\em red} knot projection $R$, and a {\em green} knot projection $G$. We refer the reader again to Figure~\ref{fig:100}. 

We say that $P$ is {\em pairwise crossing} if $B,R$, and $G$ pairwise cross each other. Since the components of the link $L6n1$ are pairwise linked, it follows that if $P$ is a projection of $L6n1$, then $P$ must be pairwise crossing. Our main result is that this obvious necessary condition is also sufficient.

\begin{theorem}\label{thm:main}
A projection of a $3$-component link is a projection of the link $L6n1$ if and only if it is pairwise crossing.
\end{theorem}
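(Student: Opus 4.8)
The plan is to prove the nontrivial direction: every pairwise crossing projection of a $3$-component link is a projection of $L6n1$. The necessity of the pairwise crossing condition is already observed in the text, since the components of $L6n1$ are pairwise linked. So fix a pairwise crossing projection $P = B \cup R \cup G$, where $B,R,G$ are knot projections. Our goal is to exhibit a resolution of $P$ into a diagram $D$ of $L6n1$.

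First I would set up the right intermediate target. A natural strategy is to resolve each of $B$, $R$, $G$ individually so that each becomes an \emph{unknot} diagram --- this is possible for any knot projection, e.g. by the standard ``descending diagram'' resolution (ordering the strands and always crossing over when first reaching a crossing). After this step the three components are unknots, and what remains to be controlled is purely the pairwise linking behaviour: we must show that the inter-component crossings (which exist, by the pairwise crossing hypothesis) can be resolved so that the resulting three-component link of unknots is exactly $L6n1$. Here I would invoke the presentation of $L6n1$: it is (up to a standard check against the Thistlethwaite table / link invariants) the three-component link of three pairwise-linked unknots in which each pair has linking number $\pm 1$ and the triple is ``chain-like'' rather than ``Borromean-like'' --- i.e.\ it is not the connected sum pattern nor the Borromean rings, and in fact $L6n1$ is the ``chain link'' $(2,2,2)$ pretzel-type three-chain. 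The key reduction is therefore: \emph{a diagram consisting of three unknots, pairwise sharing at least one crossing, can always be re-resolved at the inter-component crossings to become a diagram of $L6n1$.}

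To carry this out I would argue at the level of each pair. For a fixed pair, say $B$ and $R$: they cross at an even number of points $2k$ with $k\ge 1$ (even because each is a closed curve, and $B\cup R$ separates the sphere so the parity of crossings along $R$ with $B$ is even). Among these crossings, I choose a pair of ``consecutive'' crossings (consecutive along $B$, say) cutting off an innermost bigon-free arc configuration, and resolve those two so they contribute linking number $+1$ between the components, while resolving \emph{all other} $B$--$R$ crossings as ``nugatory/cancelling'' pairs (resolve them so that a sequence of Reidemeister~II moves removes them without changing the unknottedness of either component or the linking number). Doing this for all three pairs simultaneously is the delicate point, because the three knots' crossings are interleaved; I would handle this by processing the pairs in a fixed order and showing that the choices made for one pair do not obstruct the cancellation scheme for the next, using that Reidemeister~II cancellations only need a local bigon and that we can always find one by an innermost-arc argument on the sphere. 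The outcome is a diagram of three unknots with pairwise linking numbers all equal to $+1$ and no extra crossings --- and one then checks directly that such a diagram (three round circles chained cyclically, each over-under once with the next) is the standard alternating $6$-crossing diagram of $L6n1$, completing the proof via a finite sequence of Reidemeister moves.

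The main obstacle will be the last paragraph's \emph{simultaneous} cancellation: ensuring that after reducing the $B$--$R$ crossings one can still find, for the $R$--$G$ and $B$--$G$ pairs, the local bigons needed for the Reidemeister~II moves, without the already-fixed over/under data getting in the way. I expect the clean way around this is to \emph{not} cancel crossings in situ but instead to first prove a normalization lemma: any pairwise crossing projection $P$ is related by a sequence of projection-level moves (the analogues of Reidemeister moves for projections --- adding/removing bigons and triangles, i.e.\ the ``flat'' Reidemeister moves) to a \emph{standard} pairwise crossing projection $P_0$, namely the $6$-crossing projection underlying the standard $L6n1$ diagram; then, since $P_0$ manifestly \emph{is} a projection of $L6n1$, and since flat Reidemeister moves on $P$ lift to Reidemeister moves on a suitable resolution, $P$ is a projection of $L6n1$ as well. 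Proving that normalization lemma --- essentially a classification of pairwise crossing $3$-component projections up to flat Reidemeister moves, or at least enough of one to reach $P_0$ --- is where the real combinatorial work lies, and is the step I would expect to occupy the bulk of the paper.
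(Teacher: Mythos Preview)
Your second plan --- reduce the projection, not the diagram, to a minimal ``normal form'' and then check directly that the normal form resolves to $L6n1$ --- is exactly the paper's strategy. But as stated your proposal has two genuine gaps.

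First, flat Reidemeister moves are the wrong toolkit. You need a monotone reduction: each step must produce a projection $\overline{P}$ with $\mathrm{Link}(\overline{P}) \subseteq \mathrm{Link}(P)$, so that $L6n1 \in \mathrm{Link}(\overline{P})$ pulls back. Flat $R1/R2$ deletions do this, but a generic pairwise crossing projection need not have any monogon or bigon to delete, and flat $R3$ alone does not decrease crossings. The paper instead introduces two stronger, purpose-built reductions: a \emph{shortcut} (given a face whose boundary contains two edges of the same colour, replace one of the two arcs of that colour between them by a chord through the face, provided the surviving arc still meets both other colours) and a \emph{$\Theta$-simplification}. Neither is a flat Reidemeister move; the containment $\mathrm{Link}(\overline{P}) \subseteq \mathrm{Link}(P)$ for the shortcut is proved directly with the descending algorithm on the discarded arc, not by lifting projection moves to diagram moves. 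Your sentence ``flat Reidemeister moves on $P$ lift to Reidemeister moves on a suitable resolution'' is where the argument would stall: it is true move-by-move, but you cannot guarantee a \emph{decreasing} sequence of flat moves reaching a $6$-crossing target.

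Second, there is not a single normal form $P_0$: up to equivalence there are \emph{two} irreducible pairwise crossing projections (the Krupp and non-Krupp arrangements of three circles), and one must check that both resolve to $L6n1$. The bulk of the paper is the combinatorial classification showing these are the only irreducible projections.

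Your first plan (resolve each component descendingly, then massage the inter-component crossings) does not get off the ground, for a reason you partly see: the projection is fixed, so you cannot ``cancel'' inter-component crossings by Reidemeister~II unless the projection already presents the relevant bigons, and in general it does not. Controlling pairwise linking numbers alone is also insufficient to pin down $L6n1$ among $3$-component links of unknots.
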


%In the proof of Theorem~\ref{thm:main} we use combinatorial techniques and a knot theoretical argument developed by Taniyama in~\cite{taniyamaknots}. As we discuss in more detail in Section~\ref{sec:concludingremarks}, it is possible to give an almost purely combinatorial proof of Theorem~\ref{thm:main}. Even though this alternative proof is arguably more elementary, it turns out to be longer than the proof we give below.

\subsection{Overview of the proof of Theorem~\ref{thm:main}}

For convenience, for the rest of the paper we regard every link projection as a $4$-regular graph embedded on $\sphere$, by turning each crossing into a degree $4$ vertex. See Figure~\ref{fig:200}.

Following~\cite{ptaniyama} and~\cite{taniyamaknots}, if $P$ is a link projection then $\Ls{P}$ denotes the set of all those links $L$ such that $P$ is a projection of $L$. With this notation, Theorem~\ref{thm:main} claims that if $P$ is a $3$-component link projection, then $L6n1 \in \Ls{P}$ if and only if $P$ is pairwise crossing.

%That is, $\Ls{P}$ is the set of links $L$

A key tool in the proof of Theorem~\ref{thm:main} is the identification of two {\em reduction operations} on link projections, introduced in Section~\ref{sec:red}. As we will see, when we perform a reduction operation on a pairwise crossing projection $P$ we obtain a projection $\oP$ that satisfies the following properties:

\vglue 0.1 cm
\noindent{(R1)} {\sl $\oP$ has fewer vertices than $P$.}
\vglue 0.1 cm
\noindent{(R2)} {\sl $\oP$ is pairwise crossing.}
\vglue 0.1 cm
\noindent{(R3)} {\sl $\Ls{\oP} \subseteq \Ls{P}$.}
\vglue 0.1 cm

A pairwise crossing projection $P$ is {\em irreducible} if we cannot apply any reduction operation to it. If we start with an arbitrary pairwise crossing projection $P$, after performing a finite number of reduction operations we end up with an irreducible projection $P'$. An iterative application of (R3) yields that $\Ls{P'} \subseteq \Ls{P}$. In particular, if $P'$ is a projection of $L6n1$, then $P$ is also a projection of $L6n1$. Therefore  {\em in order to prove Theorem~\ref{thm:main}, it suffices to show that every irreducible projection is a projection of $L6n1$}.

%Our task then reduces to showing that every irreducible projection is a projection of $L6n1$. 

To achieve this goal, we fully characterize which projections are irreducible: as we state in Section~\ref{sec:proofmain} (see Proposition~\ref{pro:twoirr} and Figure~\ref{fig:670}) up to equivalence there are only two irreducible projections. It is easy to see that they are both projections of $L6n1$ (see Figure~\ref{fig:670}). Thus every irreducible projection is indeed a projection of $L6n1$, and so Theorem~\ref{thm:main} follows. 

\section{The reduction operations}\label{sec:red}

Before we identify the reduction operations, we make a few elementary remarks on link projections. Recall that we regard link projections as $4$-regular graphs embedded on $\sphere$.

\subsection{Straight-ahead walks, monochromatic vertices, and bichromatic vertices}
We recall that a {\em straight-ahead walk} in a $4$-regular graph is a walk in which every time we reach a vertex $v$ in the walk, the next edge in the walk is the opposite edge to the one from which we arrived to $v$. A walk is {\em closed} if its final vertex is the same as its initial vertex.

We are interested in projections of $3$-component links, and so every projection under consideration is a projection of a $3$-component link. Thus every projection $P$ of interest is the edge-disjoint union $P=B\cup R\cup G$ of three straight-ahead closed walks: a {\em blue} walk $B$, a {\em red} walk $R$, and a {\em green} walk $G$. We refer the reader to Figure~\ref{fig:200}. %We recall from the previous section that $P$ is {\em pairwise crossing} if the walks $B,R$, and $G$ pairwise cross each other.

% *********************************************************
\begin{figure}[ht!]
\centering
\scalebox{0.31}{\input{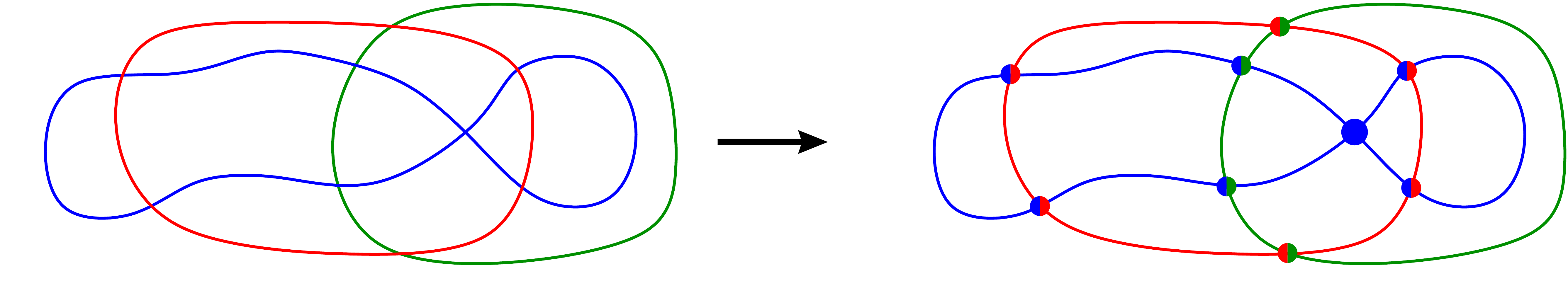_t}}
\caption{A link projection is regarded as a $4$-regular graph embedded on $\sphere$, by turning each crossing into a degree $4$ vertex. Every $3$-component link projection is the edge-disjoint union of three straight-ahead closed walks: a {blue} walk $B$, a {red} walk $R$, and a {green} walk $G$. The projection in this figure is pairwise crossing: there is at least one blue-red vertex, at least one blue-green vertex, and at least one red-green vertex.}
\label{fig:200}
\end{figure}
% *********************************************************

If $v$ is a vertex of $P$, and the four edges incident with $v$ are of the same colour, then we also colour $v$ with this colour, and say that $v$ is {\em monochromatic}. Otherwise, two of the edges incident with $v$ are of one colour, and the other two are of another colour. In this case we colour $v$ with both colours, and say that $v$ is {\em bichromatic}. Thus a monochromatic vertex is either blue, red, or green, and there are three {\em types} of bichromatic vertices: blue-red, blue-green, or red-green. See Figure~\ref{fig:200}.

%For instance, in the projection in Figure~\ref{fig:200} there is one monochromatic vertex (blue), four blue-red vertices, two blue-green vertices, and two red-green vertices.

In view of Theorem~\ref{thm:main}, our interest lies in pairwise crossing projections. In every such projection the blue straight-ahead closed walk $B$ and the red straight-ahead closed walk $R$ must cross each other, and so there is at least one blue-red vertex. Actually, using the Jordan curve theorem it is easy to see that there must be at least two blue-red vertices. Similarly, there are at least two blue-green vertices and at least two red-green vertices. 

\subsection{The reduction operations}

As we mentioned in the previous section, an essential tool in the proof of Theorem~\ref{thm:main} is the existence of two {\em reduction operations} on link projections, each of which satisfies properties (R1)--(R3). %We start with an operation that is particularly important, as the second and third operations are particular instances of it.

\subsubsection{The first reduction operation: shortcutting a projection}

Let $P=B\cup R\cup G$ be a pairwise crossing projection. {Suppose that there is a face $f$ in $P$ whose boundary contains two edges $e,e'$ of the same colour, which without loss of generality we may assume to be blue. We refer the reader to Figure~\ref{fig:220}(i) for an illustration, where $f$ is the shaded face and $e, e'$ are the thick edges.}

{As illustrated in Figure~\ref{fig:220}(ii), we subdivide the edge $e$ with a degree $2$ vertex $x$, and subdivide $e'$ with a degree $2$ vertex $y$. As we show in Figure~\ref{fig:220}(iii) and (iv), the blue straight-ahead closed walk $B$ is naturally decomposed into two straight-ahead walks $B_1$ and $B_2$, each of which starts at $x$ and ends at $y$.} {We say that $B_1$ and $B_2$ are the $xy$-{\em walks}. For $i\in\{1,2\}$, we say that $B_i$ is {\em colourful} if it has at least one blue-red vertex and at least one blue-green vertex.}

% *********************************************************
\def\tq#1{{\Scale[2.0]{#1}}}
\def\tj#1{{\Scale[3.0]{#1}}}
\def\tz#1{{\Scale[2.4]{#1}}}
\begin{figure}[ht!]
\centering
\scalebox{0.29}{\input{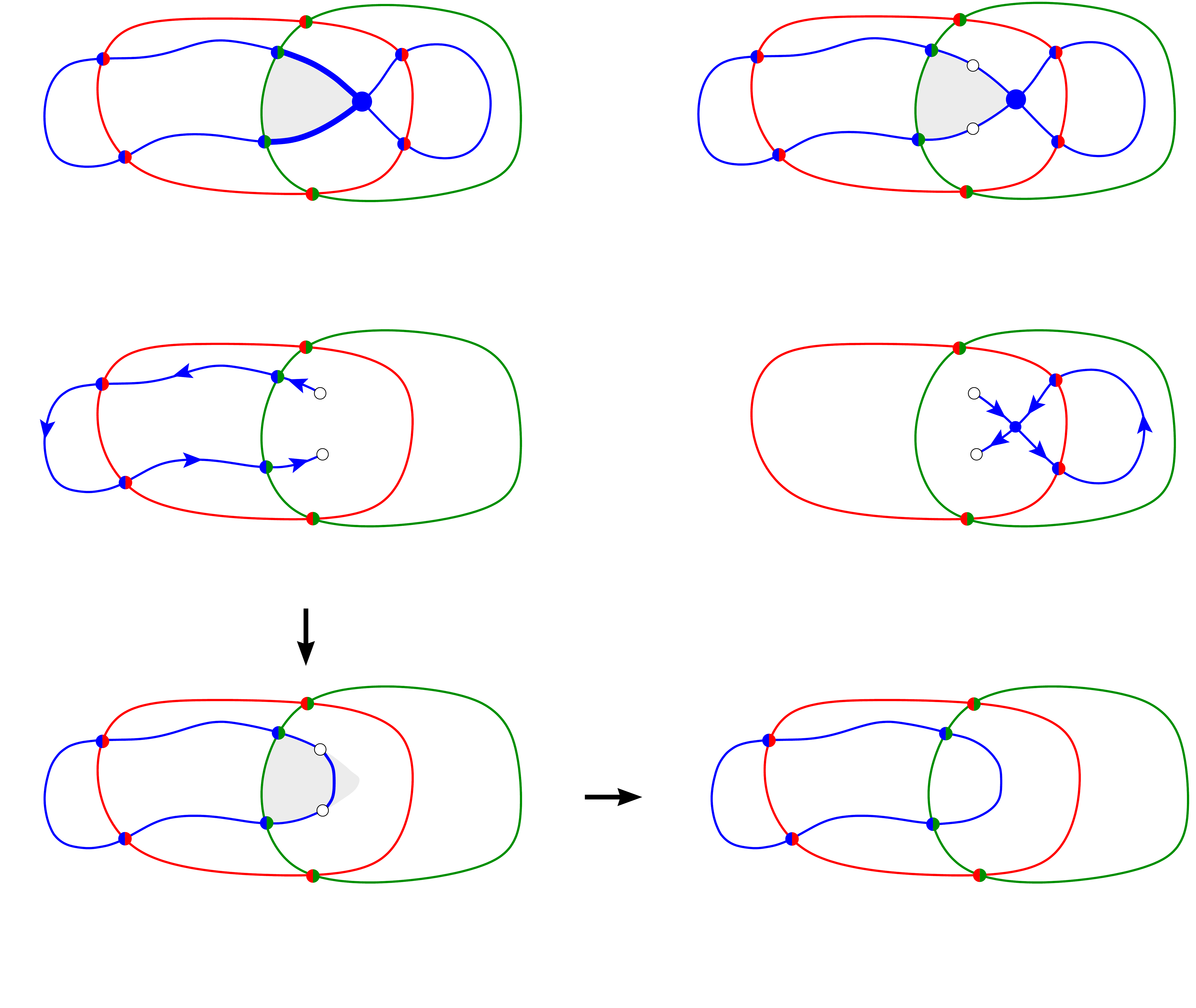_t}}
\caption{Illustration of the shortcut operation.}
\label{fig:220}
\end{figure}
\def\tz#1{{\Scale[2.0]{#1}}}
% *********************************************************

{Suppose that at least one of $B_1$ and $B_2$ is colourful. As in the example in Figure~\ref{fig:220}, we may assume without loss of generality that $B_1$ is colourful. We can then proceed to {\em shortcut} $B$, as follows. First, we discard $B_2$ and join $x$ and $y$ in $B_1$ with an arc contained in $f$, as illustrated in Figure~\ref{fig:220}(v). Finally we suppress $x$ and $y$. As illustrated in Figure~\ref{fig:220}(vi), as a result we obtain a new blue straight-ahead closed walk $\overline{B}$. We say that to obtain $\overline{B}$ we {\em shortcut} $B$ {\em at $x$ and $y$}, and the projection $\oP=\oB\cup R\cup G$ is obtained by {\em shortcutting $P$ at $x$ and $y$}.}

In order to show that properties (R1)--(R3) hold, we start by noting that the internal vertices of the discarded $xy$-walk ($B_2$ in our previous discussion) are in $P$ but not in $\oP$. Thus (R1) holds. We also note that the colourfulness of $B_1$ implies that $\oP$ is pairwise crossing. Thus (R2) holds.

We finally move on to showing that (R3) holds. We illustrate the proof in Figure~\ref{fig:400}, using the projections $P$ and $\oP$ from Figure~\ref{fig:220}. Suppose that $L\in \Ls{\oP}$, that is, $\overline{P}$ is a projection of a link $L$. Thus it is possible to resolve each vertex of $\overline{P}$ so that the result is a diagram $\overline{D}$ of $L$. 

To prove (R3) we need to show that $P$ is also a projection of $L$. To achieve this, we describe how to resolve each vertex of $P$ to obtain a diagram $D$ equivalent to $\oD$. 

% *********************************************************
\def\tf#1{{\Scale[2.4]{#1}}}
\def\tz#1{{\Scale[2.0]{#1}}}
\begin{figure}[ht!]
\centering
\scalebox{0.295}{\input{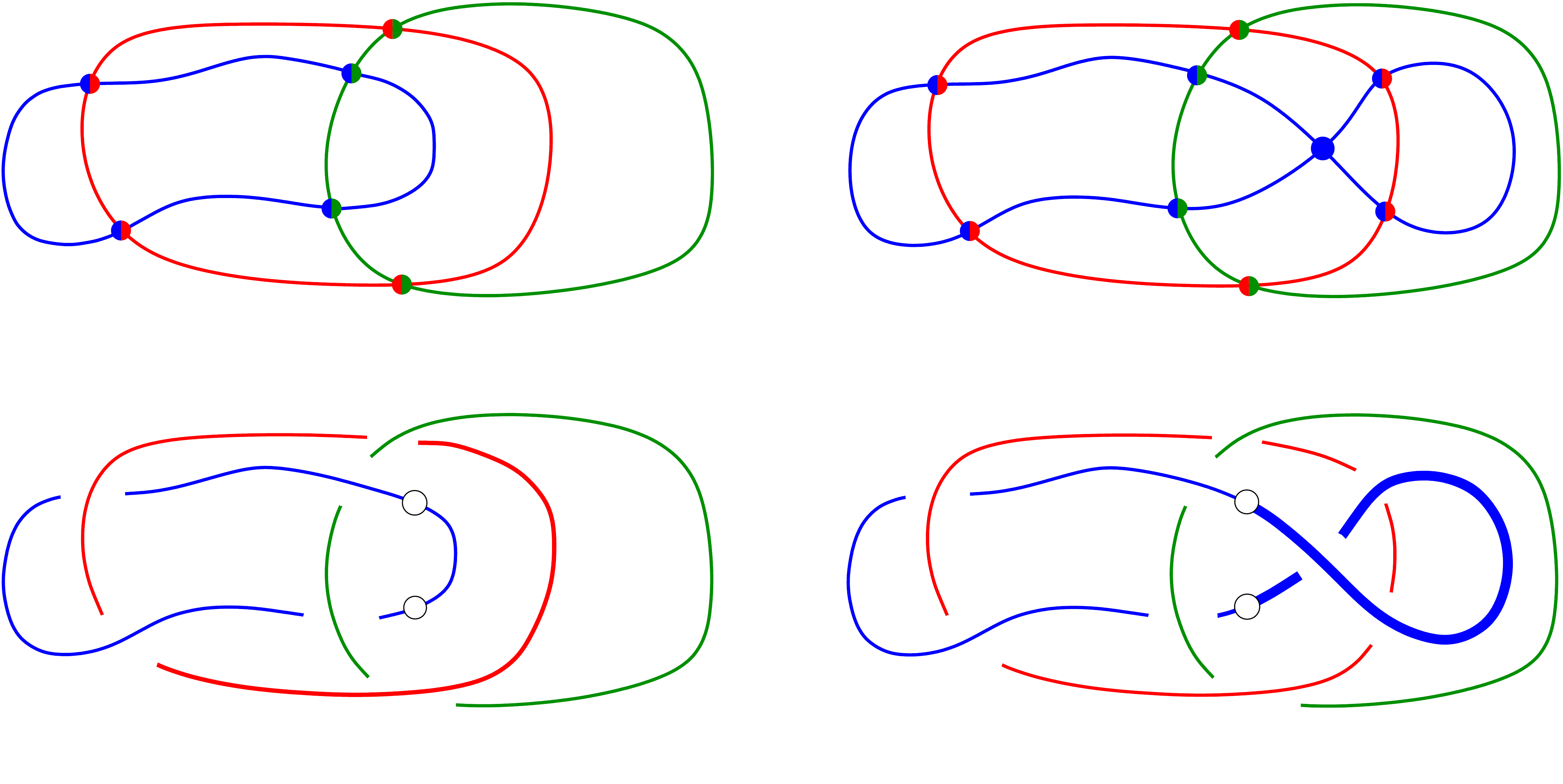_t}}
\caption{Using the descending algorithm, a diagram $\oD$ of $\oP$ can be extended to a diagram $D$ of $P$, equivalent to $\oD$.}
\label{fig:400}
\end{figure}
\def\tf#1{{\Scale[2.4]{#1}}}
\def\tz#1{{\Scale[2.0]{#1}}}
% *********************************************************

First, for each vertex of $P$ that is also a vertex of $\overline{P}$, we resolve it exactly as it was resolved back in $\overline{P}$ in order to obtain $\overline{D}$. See Figure~\ref{fig:400}. 

We now describe how to resolve each vertex of $P$ that is not a vertex in $\overline{P}$. Note that these are precisely the vertices in the straight-ahead walk $B_2$ that was discarded in the shortcutting process. Recall that the endpoints of $B_2$ are $x$ and $y$. As in~\cite{ptaniyama} and~\cite{taniyamaknots}, we use the {\em descending algorithm} from $x$ to $y$ to resolve each vertex of $B_2$: we traverse $B_2$ from $x$ to $y$, and whenever we arrive to a vertex for the first time, we resolve this vertex so that the strand we are currently traversing is the overstrand.  We refer the reader again to Figure~\ref{fig:400}: the thick part is obtained using the descending algorithm from $x$ to $y$.

Let $D$ be the diagram obtained by resolving the vertices of $P$ in the way we have described. The use of the descending algorithm implies that the strand in $D$ from $x$ to $y$ can be isotoped to the strand in $\oD$ from $x$ to $y$. This implies that $D$ and $\oD$ are equivalent diagrams, and so the proof of (R3) is complete.

Needless to say, choosing the blue straight-ahead closed walk $B$ for the discussion was arbitrary, as evidently a totally analogous shortcut operation can be applied to $R$ or to $G$. 

\subsubsection{The second reduction operation: simplifying a $\Theta$}

Suppose that $P=B\cup R\cup G$ contains a straight-ahead cycle $C=uvwu$, such as the one illustrated in Figure~\ref{fig:1160}(i). Suppose that (a) $v$ and $w$ are joined by an edge $e$ distinct from the edge that joins them in $C$; and (b) the connected component of $\sphere\setminus C$ that contains $e$ does not contain any other part of $P$. We then say that $C+e$ is a {\em $\Theta$} in $P$. See Figure~\ref{fig:1160}(ii).

% *********************************************************
\def\te#1{{\Scale[2.6]{#1}}}
\def\tf#1{{\Scale[3.2]{#1}}}
\def\tz#1{{\Scale[3.0]{#1}}}
\begin{figure}[ht!]
\centering
\scalebox{0.25}{\input{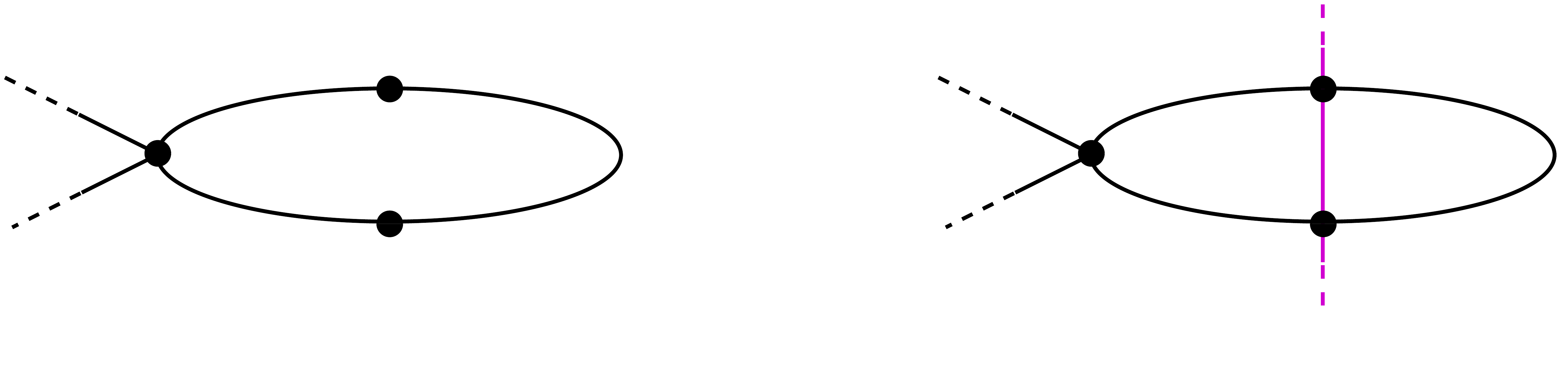_t}}
\caption{Illustration of the notion of a $\Theta$ in a projection.}
\label{fig:1160}
\end{figure}
\def\tf#1{{\Scale[2.4]{#1}}}
\def\tz#1{{\Scale[2.0]{#1}}}
\def\te#1{{\Scale[3.4]{#1}}}
% *********************************************************

We {\em simplify} $\Theta$ by {\em splitting} $u$ as illustrated in Figure~\ref{fig:1620}. This is the second and last reduction operation on link projections that we will use in this paper.

% *********************************************************
\def\te#1{{\Scale[2.6]{#1}}}
\def\tf#1{{\Scale[3.2]{#1}}}
\def\tz#1{{\Scale[3.0]{#1}}}
\begin{figure}[ht!]
\centering
\scalebox{0.25}{\input{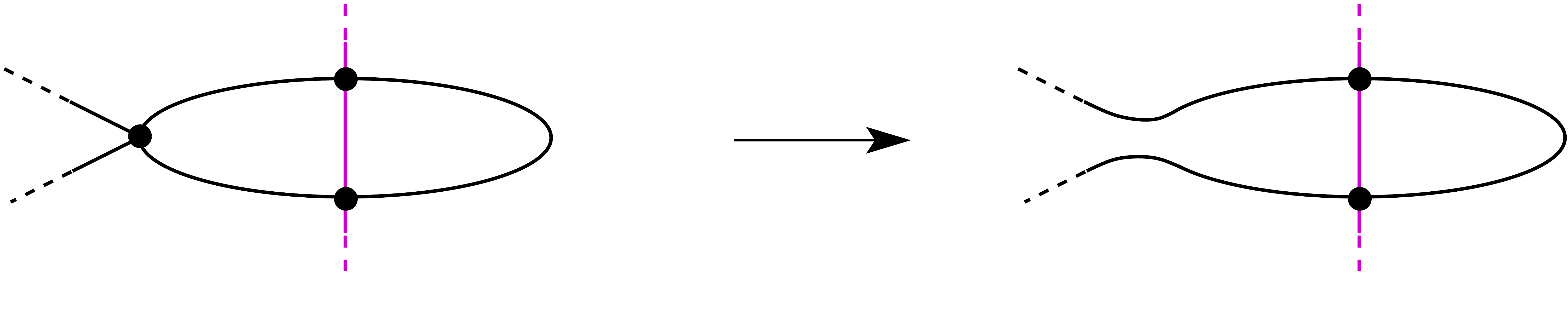_t}}
\caption{To simplify the $\Theta$ on the left hand side, we open up (``split'') $u$ as shown on the right hand side.}
\label{fig:1620}
\end{figure}
\def\tf#1{{\Scale[2.4]{#1}}}
\def\tz#1{{\Scale[2.0]{#1}}}
\def\te#1{{\Scale[3.4]{#1}}}
% *********************************************************

The projection $\oP$ obtained from $P$ by simplifying the $\Theta$ has one fewer vertex than $P$, as $u$ belongs to $P$ but it is not in $\oP$ anymore. Thus (R1) holds. Now since $C$ is a straight-ahead cycle it follows that $u$ is a monochromatic vertex. Therefore every bichromatic vertex in $P$ is also in $\oP$, and since $P$ is pairwise crossing it follows that $\oP$ is also pairwise crossing. Therefore Property (R2) also holds for this reduction operation.

We now prove that (R3) holds. Suppose that $L\in \Ls{\oP}$, that is, $\oP$ is a projection of a link $L$. Thus it is possible to resolve each vertex of $\oP$ so that the result is a diagram $\oD$ of $L$.

To prove (R3) we need to show that $P$ is also a projection of $L$. To achieve this, we describe how to resolve each vertex of $P$ to obtain a diagram $D$ equivalent to $\oD$.

First, for each vertex of $P$ that is not in $\{u,v,w\}$, we resolve it exactly as it was resolved back in $\oP$ in order to obtain $\oD$. It remains to describe how to resolve $u,v$, and $w$. 

The way in which we resolve $u,v$, and $w$ in $P$ depends on how $v$ and $w$ are resolved in $\oP$ to obtain $\oD$. For instance, if $v$ and $w$ are resolved in $\oP$ as in Figure~\ref{fig:1890}(i), then we resolve $u,v$, and $w$ in $P$ as in Figure~\ref{fig:1890}(ii). It is easy to see that the strand from $x$ to $y$ in (ii) (that is, in $D$) can be isotoped to the strand from $x$ to $y$ in (i) (that is, in $\oD$). Since all the other vertices of $P$ are resolved in the same way as they are resolved in $\oP$, we conclude that the resulting diagram $D$ of $P$ is equivalent to the diagram $\oD$ of $\oP$. 

There are three more ways in which the vertices $v$ and $w$ can be resolved in $\oP$. These are illustrated in Figure~\ref{fig:1890}(iii), (v), and (vii). If $v$ and $w$ are resolved in $\oP$ as in (iii), then we resolve $u,v$, and $w$ in $P$ as in (iv). If $v$ and $w$ are resolved in $\oP$ as in (v), then we resolve $u,v$, and $w$ in $P$ as in (vi). Finally, if $v$ and $w$ are resolved in $\oP$ as in (vii), then we resolve $u,v$, and $w$ in $P$ as in (viii).

% *********************************************************
\def\te#1{{\Scale[2.6]{#1}}}
\def\tf#1{{\Scale[3.2]{#1}}}
\def\tz#1{{\Scale[3.0]{#1}}}
\begin{figure}[ht!]
\centering
\scalebox{0.25}{\input{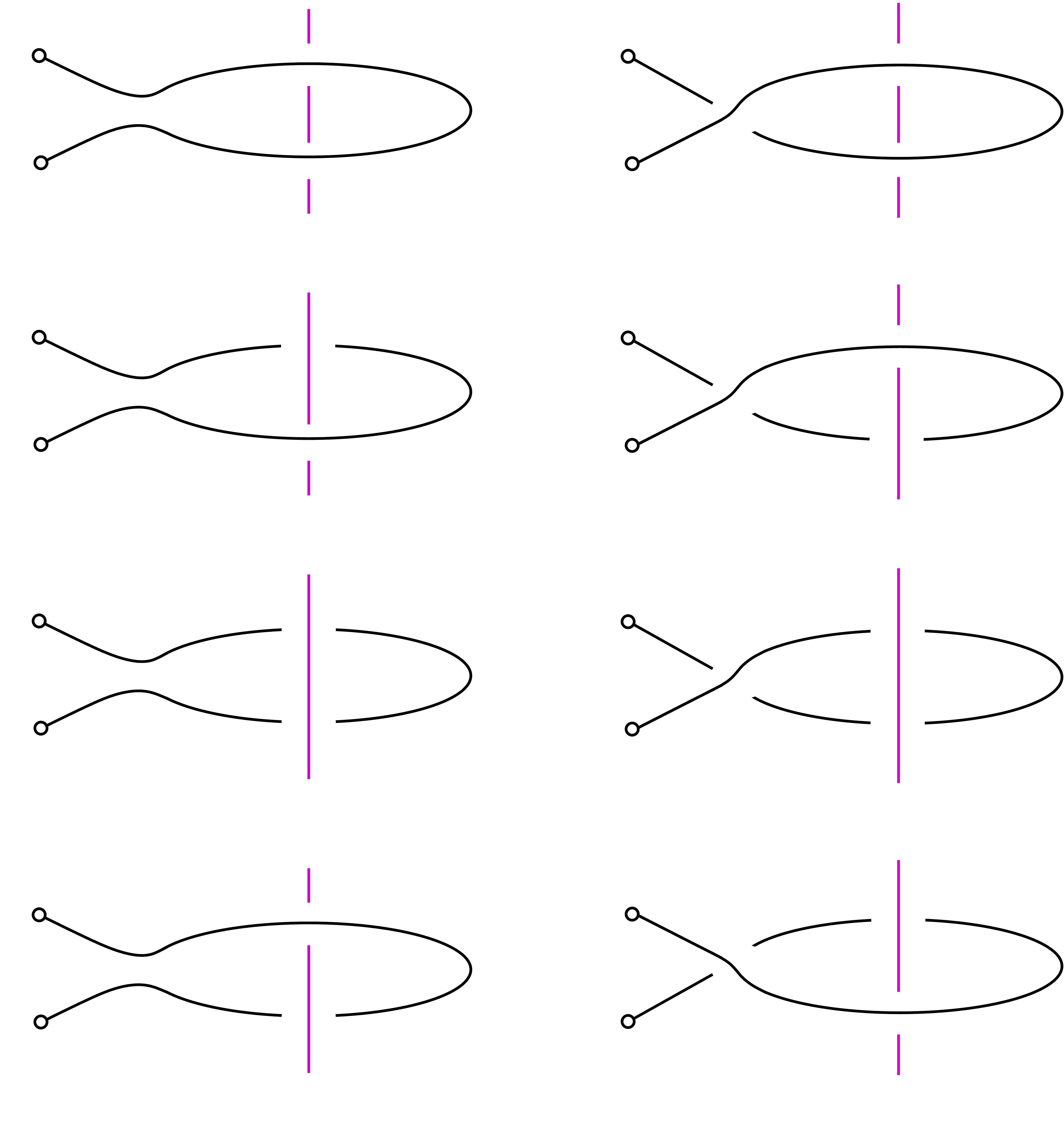_t}}
\caption{Illustration of the proof of (R3) for the operation of simplyfing a $\Theta$.}
\label{fig:1890}
\end{figure}
\def\tf#1{{\Scale[2.4]{#1}}}
\def\tz#1{{\Scale[2.0]{#1}}}
\def\te#1{{\Scale[3.4]{#1}}}
% *********************************************************

As in the first case we discussed above, in each of these cases it is easy to see that the strand from $x$ to $y$  in $D$ can be isotoped to the strand from $x$ to $y$ in $\oD$. Since all the other vertices of $P$ are resolved in the same way as they are resolved in $\oP$, we conclude that the resulting diagram $D$ of $P$ is equivalent to the diagram $\oD$ of $\oP$. This completes the proof that the reduction operation of simplifying a $\Theta$ satisfies Property (R3).

\section{Irreducible projections and proof of Theorem~\ref{thm:main}}\label{sec:proofmain}

We say that a projection is {\em irreducible} if it is pairwise crossing and it is not possible to apply any reduction operation to it. The heart of the proof of Theorem~\ref{thm:main} is the following statement.

\begin{proposition}\label{pro:twoirr}
Up to equivalence, the only irreducible projections are the projections $P_1$ and $P_2$ in Figure~\ref{fig:670}.
\end{proposition}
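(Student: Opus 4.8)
The plan is to prove that an irreducible projection is forced to be as small as possible, and then to read off $P_1$ and $P_2$ from a short enumeration. So let $P=B\cup R\cup G$ be irreducible. First I would show that $P$ has no monochromatic vertex, so that $B$, $R$ and $G$ are embedded circles. Suppose $B$ had a self-crossing. Since $B$ is a closed straight-ahead walk immersed in $\sphere$, it has an innermost loop: a simple closed sub-loop $\ell$ of $B$, based at a self-crossing $v$, bounding a disc $\Delta$ with $B\cap\overline{\Delta}=\ell$. Looking into $\Delta$ at $v$, one sees a face of $P$ whose boundary contains two distinct edges of $B$ incident with $v$ (the two edges of $\ell$ at $v$, or, should $\ell$ be a single loop edge, the two non-loop edges of $B$ at $v$); these are two edges of the same colour, so a shortcut of $B$ at a suitable such pair is available. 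One of the resulting $xy$-walks consists only of internal vertices of $\ell$, i.e.\ of the points where $R$ and $G$ cross into $\Delta$, while the other contains $v$ together with every vertex of $B$ outside $\ell$; a short case check (on which of $R,G$ meet $\Delta$, and on which of $B$'s bichromatic vertices lie on $\ell$) shows that, after possibly replacing the pair of edges being cut by another same-coloured pair on the same face, at least one of the two walks is colourful, so the shortcut applies and contradicts irreducibility. The single residual configuration, in which $\ell$ closes up around a monochromatic triangle, is exactly the case dealt with by the $\Theta$-simplification. Hence $B,R,G$ are embedded circles (applying the argument repeatedly, or inducting on the number of vertices, removes every monochromatic vertex).

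Next I would show that each pair of the circles crosses in exactly two points. Each pair crosses an even number of times, at least two by the Jordan curve argument. If $B$ and $R$ crossed four or more times, then, since two simple closed curves on $\sphere$ meeting in more than two points are not in minimal position, the arrangement $B\cup R$ contains a bigon; tracking how the circle $G$ enters and leaves this bigon produces a face of $P$ bounded in part by two arcs of a single colour for which at least one associated $xy$-walk is colourful, contradicting irreducibility. I expect this bookkeeping --- guaranteeing that, whatever $G$ does, such a face is always forced --- to be the most delicate point of the proof. Granting it, $P$ has exactly six vertices.

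It remains to enumerate. Two circles $B,R$ crossing twice cut $\sphere$ into four bigon faces whose adjacency graph is a $4$-cycle, and the self-homeomorphism group of $B\cup R$ acts transitively on these faces. The circle $G$, crossing each of $B$ and $R$ twice, is cut by its four vertices into four arcs, each lying inside one face of $B\cup R$, with cyclically consecutive arcs in adjacent faces and with exactly two of the four transitions across blue edges and two across red edges. A direct combinatorial check of such cyclic sequences shows that the four arcs either occupy the four faces one apiece, or occupy one face twice and each of its two neighbours once; any other distribution forces two of the curves to cross four times (or not at all). By the transitivity above, these two possibilities give exactly two projections up to equivalence. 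Finally one verifies by inspection that in the first every face is bounded by one blue, one red and one green edge --- so no shortcut applies, and having no monochromatic vertex it admits no $\Theta$-simplification --- while in the second the only faces with two same-coloured edges are such that those two edges separate, along the circle carrying them, its two crossings with one of the remaining colours from its two crossings with the third colour, so that both $xy$-walks fail to be colourful and no shortcut applies either. Thus both projections are irreducible, so they are precisely $P_1$ and $P_2$ of Figure~\ref{fig:670}.
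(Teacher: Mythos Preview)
Your overall strategy—kill monochromatic vertices, then bound pairwise crossings, then enumerate—is reasonable and the enumeration in steps 3–4 is fine, but steps 1 and 2 both have real gaps, and the paper's proof takes a substantially different route precisely to avoid them.

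For step 1, the innermost-loop argument breaks down in the case you gloss over: suppose every blue-red vertex lies on the innermost loop $\ell$ and every blue-green vertex lies on the complementary $v$-walk $\beta_2$ (or vice versa). Then for the face at $v$ inside $\Delta$, \emph{neither} $xy$-walk is colourful—one contains only blue-red vertices, the other only blue-green ones—and the same happens for the adjacent faces at $v$. Your ``single residual configuration'' is not a monochromatic triangle here, so the $\Theta$-move does not apply at $v$ either. A concrete instance is $B$ a figure-eight with lobes $\ell,\beta_2$, $R$ crossing only $\ell$, $G$ crossing only $\beta_2$, and $R,G$ crossing each other outside both lobes. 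Such a projection \emph{is} reducible, but the reduction that works is a $\Theta$-simplification located on $\beta_2$, and finding it requires a chain of structural deductions (that $\beta_2$ is a cycle, that almost nothing lies inside it, and that it therefore has exactly three vertices)—this is the content of the paper's Claim~\ref{cla:eq2} and its sub-claims (I)--(V), not a ``short case check''. Step 2 you explicitly leave open; the bigon bookkeeping there is comparably delicate.

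The paper sidesteps both problems with a single device: a \emph{good section} is a maximal monochromatic arc on a facial cycle whose endpoints are bichromatic of different types. It first proves (Observation~\ref{obs:goodsectionsexist}) that at least two good sections of each colour always exist, then shows (Lemma~\ref{lem:nomono}, via Claims~\ref{cla:ge3} and~\ref{cla:eq2}) that in an irreducible projection every good section has length one. The absence of monochromatic vertices then falls out in one line (Claim~A of Lemma~\ref{lem:simple}), and an Euler-formula count on face degrees (Claims~B and~C) forces exactly six vertices. The good-section concept is what lets the paper locate colourful $xy$-walks systematically, exactly where your direct argument cannot.
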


% *********************************************************
\def\tf#1{{\Scale[2.8]{#1}}}
\begin{figure}[ht!]
\centering
\scalebox{0.28}{\input{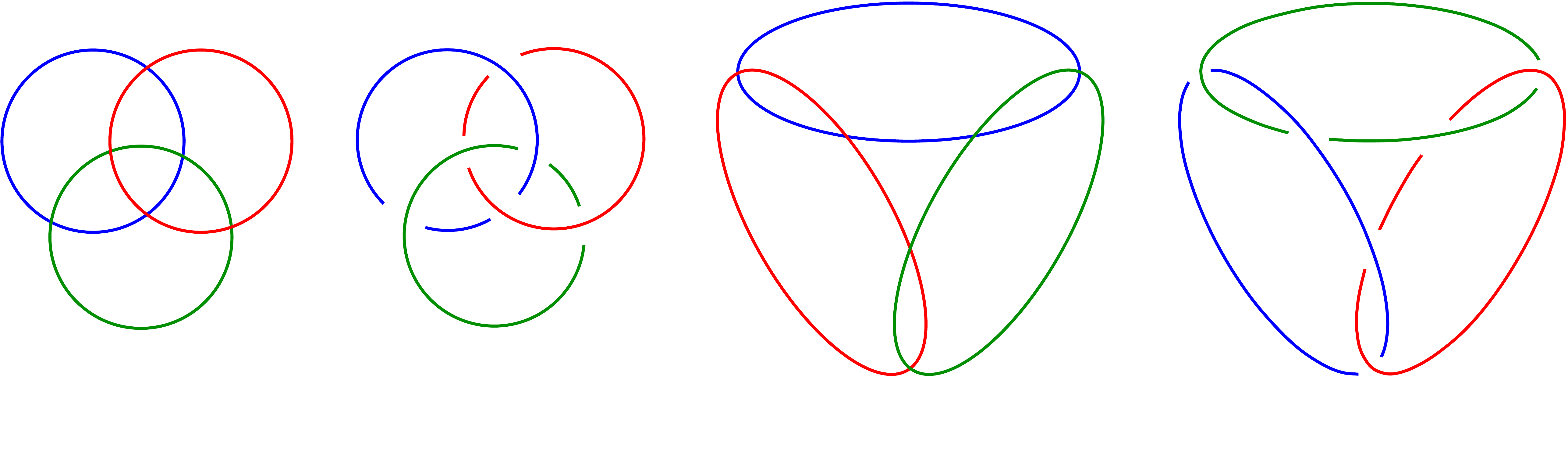_t}}
\caption{The two irreducible projections $P_1$ and $P_2$.}
\label{fig:670}
\end{figure}
\def\tf#1{{\Scale[2.4]{#1}}}
% *********************************************************

The rest of the paper is devoted to the proof of Proposition~\ref{pro:twoirr}. As we will now see, Theorem~\ref{thm:main} follows easily from this statement.

%\blueit{The key result behind the proof of Theorem~\ref{thm:main} is the next proposition, where we use the notion of equivalence between link projections. Two link projections are {\em equivalent} if there is a self-homeomorphism of $\sphere$ that takes one to the other. Maybe say that two equivalent projections have the same resolution properties?}

\begin{proof}[Proof of Theorem~\ref{thm:main}, assuming Proposition~\ref{pro:twoirr}]
As we pointed out before stating Theorem~\ref{thm:main}, the ``only if'' part of the theorem holds simply because $L6n1$ is pairwise linked. Thus in order to prove the theorem we need to show the ``if'' part: every pairwise crossing projection is a projection of $L6n1$.

Let $P$ be any pairwise crossing projection. Our goal is to show that $L6n1\in \Ls{P}$. In view of Properties (R1) and (R2) of the reduction operations, given a pairwise crossing projection $P$, we can iteratively apply to $P$ a sequence of reduction operations until we reach an irreducible projection $P'$. An iterative application of (R3) then implies that $\Ls{P'}\subseteq \Ls{P}$. Since by Proposition~\ref{pro:twoirr} the only irreducible projections are $P_1$ and $P_2$, we conclude that ($\dag$) {\em either $\Ls{P_1} \subseteq \Ls{P}$ or $\Ls{P_2} \subseteq \Ls{P}$.}

As we illustrate in Figure~\ref{fig:670}, both $P_1$ and $P_2$ can be resolved into $L6n1$. That is, $L6n1\in\Ls{P_1}$ and $L6n1\in\Ls{P_2}$. In view of ($\dag$), it follows that $L6n1\in\Ls{P}$. 
\end{proof}

\section{Towards the proof of Proposition~\ref{pro:twoirr}: properties of irreducible projections}\label{sec:proofmain}

In this section we pave the way towards the proof of Proposition~\ref{pro:twoirr}, by establishing several properties that must be satisfied in an irreducible projection. More specifically, we identify two structures that cannot exist in an irreducible projection, and we show that every face in an irreducible projection must be bounded by a cycle.

\subsection{Disposable digons}

Let $P=B\cup R\cup G$ be a pairwise crossing projection. Suppose that there exist parallel edges $e_1,e_2$ such that $e_1\cup e_2$ bounds an open disk $\Delta$ that does not contain any part of $P$. We say that $e_1\cup e_2$ is a {\em digon}. See Figure~\ref{fig:700}.

% *********************************************************
\def\tf#1{{\Scale[2.0]{#1}}}
\def\tz#1{{\Scale[1.4]{#1}}}
\begin{figure}[ht!]
\centering
\scalebox{0.5}{\input{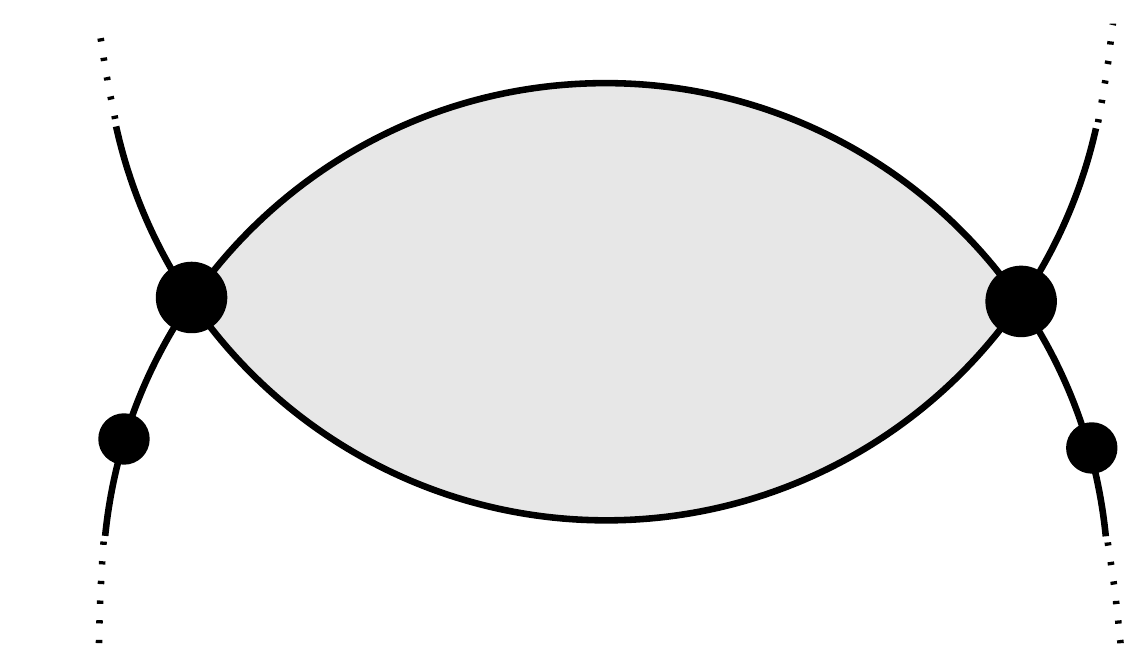_t}}
\caption{A digon in a link projection. If either $u$ and $v$ are monochromatic, or they are bichromatic (necessarily of the same type) and they are not the only bichromatic vertices of their type, then we can shortcut $P$ at $x$ and $y$.}
\label{fig:700}
\end{figure}
\def\tf#1{{\Scale[2.4]{#1}}}
\def\tz#1{{\Scale[2.0]{#1}}}
% *********************************************************

Let $u,v$ be the common endvertices of $e_1$ and $e_2$. If $e_1$ and $e_2$ are of the same colour, then $u$ and $v$ are monochromatic of the same colour. Otherwise, $u$ and $v$ are bichromatic of the same type. We say that $e_1\cup e_2$ is a {\em disposable} digon if either (i)  $u$ and $v$ are monochromatic; or (ii) $u$ and $v$ are bichromatic and they are not the only bichromatic vertices of their type in $P$.

\begin{observation}\label{obs:disdig}
In an irreducible projection there cannot be any disposable digons.
\end{observation}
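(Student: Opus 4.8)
The plan is to show that any disposable digon gives rise to a face bounded by two same-coloured edges whose associated $xy$-walks include a colourful one, so the shortcut operation applies, contradicting irreducibility. Let $e_1\cup e_2$ be a disposable digon with endvertices $u,v$, bounding the empty open disk $\Delta$. Since $\Delta$ contains no part of $P$, it is a face of $P$ whose boundary consists of the two parallel edges $e_1,e_2$, which are of the same colour (say blue, by relabelling if necessary). Subdivide $e_1$ with a degree-$2$ vertex $x$ and $e_2$ with a degree-$2$ vertex $y$; the blue straight-ahead closed walk $B$ splits into the two $xy$-walks $B_1,B_2$. By the recipe of the first reduction operation, it suffices to show that at least one of $B_1,B_2$ is colourful, i.e.\ contains a blue-red vertex and a blue-green vertex.

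The key step is a case analysis on the nature of $u$ and $v$. First suppose $u$ and $v$ are monochromatic (case (i) of the definition). Then $u$ and $v$ are blue, and every bichromatic vertex of $P$ lies on exactly one of the two $xy$-walks. I claim all of them lie on the same walk: indeed, starting at $x$, the straight-ahead walk through $e_1$ enters $u$ and—since $u$ is a degree-$4$ vertex with all four incident edges blue—leaves $u$ along the edge opposite to $e_1$, which is \emph{not} $e_2$ (as $e_1$ and $e_2$ are distinct edges of the digon sharing both endpoints, the straight-ahead rule at $u$ sends $e_1$ to the third blue edge, not to $e_2$); hence $B_1$ and $B_2$ are the two arcs of $B$ obtained by cutting at $x$ and $y$, and each blue-red and each blue-green vertex lies on exactly one of them. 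Since $P$ is pairwise crossing there is at least one blue-red vertex and at least one blue-green vertex on $B$; if they lie on the same $xy$-walk that walk is colourful and we are done. If the blue-red vertices and the blue-green vertices are split between $B_1$ and $B_2$, note that we may also perform the symmetric construction cutting $R$ (or $G$): more efficiently, observe that by choosing which colour to shortcut we have extra freedom—actually the cleanest route is: among all the bichromatic vertices of $B$, if some blue-red vertex and some blue-green vertex share a common $xy$-walk we finish; otherwise relabel so that $B_1$ carries all blue-red vertices and $B_2$ all blue-green vertices, and then apply the shortcut operation to the \emph{green} walk $G$ instead using the same empty face $\Delta$ (its boundary edges $e_1,e_2$ are blue, so this does not directly apply). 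I would instead argue directly that the split cannot happen: the disk $\Delta$ is empty, so $e_1$ and $e_2$ are consecutive in the rotation at both $u$ and $v$; tracing $B$ around, $B_1$ is the short arc through $\Delta$ which is internally empty, hence contains \emph{no} vertices of $P$ in its interior, so $B_1$ has no bichromatic vertices at all, forcing all of $B$'s bichromatic vertices onto $B_2$, which is therefore colourful. For case (ii), where $u,v$ are bichromatic of the same type, say blue-red, and are not the only blue-red vertices of $P$: the same tracing shows $B_1$ (the arc through $\Delta$) has empty interior, so its only possible bichromatic vertices are $x$'s and $y$'s neighbours among $\{u,v\}$, all blue-red; thus $B_2$ contains every blue-green vertex of $B$ (there is at least one, by pairwise crossing) and, since $u,v$ are not the only blue-red vertices, also at least one blue-red vertex; hence $B_2$ is colourful.

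In all cases one of the $xy$-walks is colourful, so by the first reduction operation (shortcutting) we may shortcut $P$, contradicting the assumption that $P$ is irreducible. Therefore an irreducible projection contains no disposable digon.

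The main obstacle I anticipate is making precise the claim that the arc of $B$ bounding the empty disk $\Delta$ has empty interior and hence carries no bichromatic vertices; this is intuitively clear from planarity (the two parallel edges are consecutive in the rotation at $u$ and at $v$, so one of the two $xy$-walks runs along $\partial\Delta$ and nothing is trapped inside), but writing it down carefully—especially handling the possibility that $e_1\cup e_2$ is not literally $B_1$ because the straight-ahead rule at $u$ might route differently—requires a short but careful argument using the rotation system of the embedding. Once that lemma is in hand, the rest is the routine case split above together with a direct appeal to the shortcut operation established earlier.
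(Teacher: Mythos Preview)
Your proposal has a genuine gap. The central assumption---that the two digon edges $e_1,e_2$ have the same colour---fails precisely in case (ii). At a bichromatic vertex the projection is a transverse crossing, so the four incident edges alternate colours in the cyclic rotation. Since $e_1$ and $e_2$ bound the empty face $\Delta$, they are consecutive in the rotation at $u$ (and at $v$), hence of \emph{different} colours. So if $u,v$ are blue--red, then one of $e_1,e_2$ is blue and the other is red; you cannot place $x$ and $y$ on same-coloured edges of $\partial\Delta$, and the shortcut operation does not apply to the face $\Delta$ at all.

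Even in case (i) your argument is not right. You correctly observe that straight-ahead at $u$ sends $e_1$ to a third edge rather than to $e_2$, but then you contradict this by asserting that one $xy$-walk is ``the short arc through $\Delta$'' with empty interior. There is no such arc: both $xy$-walks leave $\Delta$ immediately at $u$ and at $v$, and each may carry bichromatic vertices. Nothing forces all blue--red and all blue--green vertices onto the same arc.

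The paper's remedy is to place $x$ and $y$ \emph{outside} the digon: let $e_3$ and $e_4$ be the straight-ahead continuations of $e_1$ at $u$ and at $v$ respectively, and subdivide those. Both $e_3,e_4$ have the colour of $e_1$ in either case, and (because $e_2$ separates $\Delta$ from the face containing $e_3,e_4$ near $u$ and $v$) the points $x,y$ lie on a common face. Now one $xy$-walk is exactly $x,u,e_1,v,y$, with $u,v$ its only internal vertices; disposability then guarantees the other $xy$-walk is colourful. This is the missing idea you need.
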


\begin{proof}
Let $e_1\cup e_2$ be a disposable digon in an irreducible projection $P$. As we illustrate in Figure~\ref{fig:700}, let $e_3$ (respectively, $e_4$) be the edge that precedes (respectively, succeeds) $e_1$ as we traverse the straight-ahead walk (either $B,R$, or $G$) that contains $e_1$. As we also illustrate in that figure, we subdivide $e_3$ (respectively, $e_4$) with a degree $2$ vertex $x$ (respectively, $y$).

Note that $x$ and $y$ are incident with the same face. One of the two $xy$-walks contains the vertices $u$ and $v$, and it does not contain any other vertices of $P$. Now the assumption that $e_1\cup e_2$ is disposable guarantees that the other $xy$-walk is colourful, and so we can shortcut $P$ at $x$ and $y$. But this contradicts the assumption that $P$ is irreducible.
\end{proof}

\subsection{Superfluous walks}

Let $P=B\cup R\cup G$ be a pairwise crossing projection, and suppose that $P$ contains a monochromatic vertex $v$. Without loss of generality, for the purposes of this discussion we may assume that $v$ is blue.

The straight-ahead blue closed walk $B$ is then the edge-disjoint union of two straight-ahead closed walks $\beta_1$ and $\beta_2$ that start and end at $v$. These are the $v$-{\em walks}. Similarly as when we defined the shortcut operation, for $i\in\{1,2\}$ we say that $\beta_i$ is {\em colourful} if it has at least one blue-red vertex and at least one blue-green vertex. If $\beta_1$ (respectively, $\beta_2$) is colourful, then we say that $\beta_2$ (respectively, $\beta_1$) is {\em superfluous}. 

\begin{observation}\label{obs:supwal}
In an irreducible projection there cannot be any superfluous walks.
\end{observation}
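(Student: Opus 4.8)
The plan is to show that the existence of a superfluous walk forces the existence of an applicable reduction operation, contradicting irreducibility. Suppose $P = B \cup R \cup G$ is irreducible and contains a monochromatic (say blue) vertex $v$, with the two $v$-walks $\beta_1, \beta_2$ such that $\beta_1$ is colourful and $\beta_2$ is superfluous. Since $\beta_2$ has no blue-red vertex and no blue-green vertex, every vertex of $\beta_2$ other than $v$ is either a monochromatic blue vertex or a self-intersection point of $\beta_2$; in particular $\beta_2$ carries no crossings with $R$ or $G$, so the closed curve traced by $\beta_2$ is disjoint from $R \cup G$ except possibly through $v$.

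First I would reduce to a clean situation: if $\beta_2$ passes through $v$ only (i.e., $\beta_2$ is a simple closed curve bounding a disk meeting $P$ only along $\beta_2$ and at $v$), then near $v$ we have $\beta_2$ ``loop'' attached to the rest of $P$ at the single monochromatic vertex $v$. In that case $v$ together with the first edge of $\beta_2$ out of $v$ and back into $v$ forms (after subdividing) a digon or an even simpler configuration; more directly, split $v$ to detach the $\beta_2$-loop, which is exactly the move available because $v$ is monochromatic. So I would handle the general case by induction on the number of vertices of $\beta_2$: among all monochromatic blue vertices lying on $\beta_2$, one can choose an innermost one — a monochromatic vertex $u$ on $\beta_2$ such that one of the two $u$-sub-walks of $\beta_2$ is a simple closed curve bounding a disk $\Delta$ disjoint from the rest of $P$. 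This uses the Jordan curve theorem applied to $\beta_2$ viewed as a closed curve in $\sphere$ and a minimality argument on the region it bounds that contains no other part of $P$.

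Once such an innermost structure is located, the strategy is to exhibit a disposable digon. Subdividing the two edges of $P$ that enter and leave the simple closed sub-walk at $u$ produces vertices $x, y$ on a common face, with one $xy$-walk equal to the simple closed loop (containing only $u$ among vertices of $P$, and $u$ monochromatic) and the other $xy$-walk equal to $\beta_1$ together with the remainder of $\beta_2$; the latter contains a blue-red and a blue-green vertex because $\beta_1$ is colourful, hence the other $xy$-walk is colourful. Alternatively, and perhaps more cleanly, I would argue directly that this innermost loop at $u$ produces a disposable digon in the sense of the previous subsection (the loop together with $u$ yields two parallel edges after suppression-type bookkeeping), and then invoke Observation~\ref{obs:disdig} to get the contradiction. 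Either way, the shortcut operation is applicable at $x$ and $y$, contradicting irreducibility.

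The main obstacle I anticipate is the bookkeeping needed to guarantee an \emph{innermost} monochromatic vertex on $\beta_2$ whose associated sub-loop bounds a disk free of the rest of $P$: $\beta_2$ may be a fairly complicated self-intersecting closed curve, so one must argue carefully — using that $\beta_2$ is a straight-ahead closed walk and that all its vertices are monochromatic blue — that the planar region structure it cuts out of $\sphere$ admits such an innermost cell, and that the disk there genuinely misses $R$, $G$, and the rest of $B$. The colourfulness of the complementary walk, and hence property (R2)-type preservation, is then automatic, but getting the topology of the nested loops right is where the real care is required. A cleaner alternative worth attempting is a pure counting/extremality argument: choose, among all configurations, the monochromatic vertex $v$ and the superfluous walk $\beta_2$ minimizing the vertex count of $\beta_2$; minimality should force $\beta_2$ to be a simple loop through $v$ alone, reducing everything to the easy base case.
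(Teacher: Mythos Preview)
You have misread the definition of ``superfluous''. Saying $\beta_2$ is superfluous means only that $\beta_1$ is colourful; it does \emph{not} say that $\beta_2$ itself lacks blue-red or blue-green vertices. So your opening sentence (``$\beta_2$ has no blue-red vertex and no blue-green vertex'') is unjustified, and the whole innermost-loop machinery you build on it is aimed at a situation that need not occur.

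More importantly, the innermost-loop argument is unnecessary. The paper's proof is the two-line ``easy base case'' you mention at the end, applied directly at $v$. The point you are missing is a local one: because $B$ is a straight-ahead walk, the two edges of $\beta_1$ at $v$ are an opposite pair in the rotation, and hence the two edges $e_1,e_2$ of $\beta_2$ at $v$ are \emph{adjacent} in the rotation at $v$. Adjacent edges at a vertex lie on the boundary of a common face. Subdivide $e_1,e_2$ by $x,y$; one $xy$-walk is $x\,v\,(\text{all of }\beta_1)\,v\,y$, which is colourful because $\beta_1$ is, so the shortcut operation applies immediately. No induction, no innermost disk, no assumption on the vertices of $\beta_2$ is needed.
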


\begin{proof}
Using the notation and terminology from the previous discussion, by way of contradiction suppose that $\beta_2$ is superfluous, and so $\beta_1$ is colourful. As we illustrate in Figure~\ref{fig:1910}, we let $e_1,e_2$ be the edges incident with $v$ that are in $\beta_2$.

% *********************************************************
\def\tf#1{{\Scale[2.0]{#1}}}
\def\tz#1{{\Scale[1.5]{#1}}}
\begin{figure}[ht!]
\centering
\scalebox{0.45}{\input{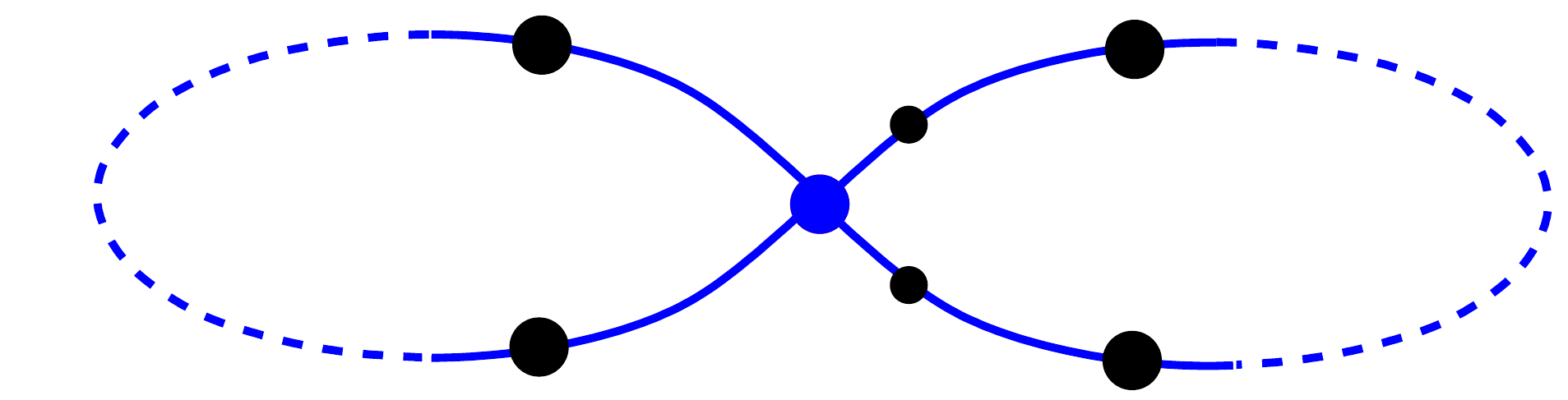_t}}
\caption{Illustration of the proof of Observation~\ref{obs:supwal}.}
\label{fig:1910}
\end{figure}
\def\tf#1{{\Scale[2.4]{#1}}}
\def\tz#1{{\Scale[2.0]{#1}}}
% *********************************************************

As we also illustrate in Figure~\ref{fig:1910}, we subdivide $e_1$ (respectively, $e_2$) with a degree $2$ vertex $x$ (respectively, $y$). One of the two $xy$-walks contains all the edges and vertices of $\beta_1$. Since $\beta_1$ is colourful, it follows that this $xy$-walk is colourful, and so we can shortcut $P$ at $x$ and $y$. This contradicts the assumption that $P$ is irreducible.
\end{proof}

\subsection{Faces in irreducible projections}

We conclude this section with an important remark on irreducible projections.

\begin{observation}\label{obs:facial}
If $P$ is an irreducible projection, then every face of $P$ is bounded by a cycle.
\end{observation}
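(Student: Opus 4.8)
The plan is to argue by contradiction: suppose $P$ is irreducible but some face $f$ of $P$ is not bounded by a cycle. Since $P$ is a $4$-regular connected graph embedded on $\sphere$, the boundary walk of $f$ is a closed walk that traverses each incident edge either once or twice; it fails to be a cycle precisely when it repeats a vertex (or an edge). I would split into two cases according to how the boundary walk degenerates, and in each case produce either a disposable digon or a shortcut, contradicting Observation~\ref{obs:disdig} or the irreducibility of $P$.

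First I would handle the case where the boundary of $f$ uses some edge $e$ twice. Then $e$ is incident with $f$ on both sides, so $e$ is a cut-edge of $P$; but a $4$-regular graph has no cut-edges (removing $e$ leaves two odd-degree vertices in one component unless the graph is disconnected there, and even so a straight-ahead walk must cross $e$ an even number of times), so this case is impossible — I should double-check the precise $4$-regular argument but it is standard. Next, the case where the boundary walk of $f$ visits a vertex $v$ twice but uses no edge twice. Walking along $\partial f$, we see two edges $e, e'$ both incident with $v$ and both on the boundary of $f$, with $e \ne e'$. If $e$ and $e'$ are the two \emph{opposite} edges at $v$ in the straight-ahead sense, then the straight-ahead walk through $v$ has both of its $v$-edges on $\partial f$: subdividing $e$ with $x$ and $e'$ with $y$, the two $xy$-walks are the two sides of this structure, and one of them bounds a region of $\sphere \setminus$(the walk) containing no other part of $P$ — this is essentially the configuration of a digon or a superfluous walk. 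If instead $e$ and $e'$ are \emph{consecutive} (non-opposite) at $v$, then the two edges of $\partial f$ at $v$ together with the remaining structure give two parallel edges between $v$ and a common neighbour, bounding an empty disk — i.e.\ a digon $e_1 \cup e_2$. In either subcase I would invoke the relevant reduction: if the digon is disposable we contradict Observation~\ref{obs:disdig} directly; if it is a digon whose two vertices are the \emph{only} bichromatic vertices of their type, I would instead show the presence of a monochromatic vertex elsewhere (forced because $f$'s boundary still has length exceeding the number of distinct bichromatic vertices available, or by a parity/counting argument) and apply Observation~\ref{obs:supwal}; and in the remaining cases I would exhibit $x,y$ incident with $f$ whose ``far'' $xy$-walk is colourful, allowing a shortcut of $P$ and contradicting irreducibility.

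The cleanest uniform approach, which I would try first, is this: pick any face $f$ whose boundary walk $W$ is not a cycle, and let $v$ be a vertex appearing at least twice on $W$. Among the (at least two) edges of $W$ incident with $v$, pick two, say $e$ and $e'$, that appear in $W$ at two different visits to $v$. Subdivide $e$ with a degree-$2$ vertex $x$ and $e'$ with a degree-$2$ vertex $y$; both $x$ and $y$ now lie on the face $f$ (since $e,e' \subseteq \partial f$). The straight-ahead walk of $P$ containing $e$ (equivalently the one containing $e'$, if the same colour) decomposes into two $xy$-walks $\gamma_1, \gamma_2$. The point is that the portion of $W$ between the two visits to $v$ — which is a sub-walk of $\partial f$ lying ``between'' $x$ and $y$ — is short, and I want to show at least one of $\gamma_1, \gamma_2$ is colourful so that shortcutting $P$ at $x$ and $y$ is legal, contradicting irreducibility. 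If neither $\gamma_i$ is colourful, then one of them, say $\gamma_2$, has all of its bichromatic vertices of a single type (or none), which means the other walk $\gamma_1$ carries essentially all the crossing information; combined with pairwise-crossing this pins down a rigid local picture that is exactly a disposable digon or a superfluous walk, handled by Observations~\ref{obs:disdig} and \ref{obs:supwal}.

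The main obstacle I anticipate is the bookkeeping in the non-colourful subcase: ruling out the possibility that \emph{both} $xy$-walks fail to be colourful requires care, because a priori the crossings could be distributed so that each side misses one colour. I expect the resolution is that in that situation the two visits of $W$ to $v$ force either (a) a monochromatic vertex whose $v$-walk is superfluous, or (b) a digon at a bichromatic vertex pair that, by the pairwise-crossing hypothesis and the existence of at least two bichromatic vertices of each type (noted in the text after Figure~\ref{fig:200}), must be disposable. Making the ``at least two of each type'' count interact correctly with ``these particular two vertices are the whole story on one side'' is the delicate point; I would organize it as a short lemma-free case check, treating separately whether $v$ is monochromatic or bichromatic, and within the bichromatic case whether the empty disk is cut off on the $e$-$e'$ side or the complementary side.
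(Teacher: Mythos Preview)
Your plan overcomplicates the matter and contains at least one wrong step. The paper's proof is a two-line argument via cut vertices: in a connected plane graph, a face fails to be bounded by a cycle exactly when some vertex appears twice on its boundary walk, and that forces a cut vertex. So it suffices to show that an irreducible $P$ has no cut vertex. If $v$ were a cut vertex, then $v$ must be monochromatic (say blue): if $v$ were bichromatic, the third colour's walk avoids $v$ entirely and, together with pairwise crossing, keeps $P\setminus\{v\}$ connected. Now with $v$ blue and a cut vertex, the red and green walks must both lie in the same component of $P\setminus\{v\}$ (otherwise $R$ and $G$ could not meet), so one of the two $v$-walks $\beta_1,\beta_2$ contains every blue-red and every blue-green vertex. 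That $v$-walk is colourful and the other is superfluous, contradicting Observation~\ref{obs:supwal}. That is the whole proof.

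Your second paragraph has a concrete error: ``if $e$ and $e'$ are consecutive at $v$, then the two edges of $\partial f$ at $v$ together with the remaining structure give two parallel edges between $v$ and a common neighbour, bounding an empty disk.'' Two consecutive edges at $v$ lying on $\partial f$ is simply a single ordinary corner of $f$ at $v$; it says nothing about parallel edges or digons. For $v$ to occur twice on $\partial f$ you need \emph{two} corners at $v$ inside $f$, and the relevant dichotomy is whether those two corners are opposite or adjacent in the cyclic order at $v$ --- not whether a single pair of edges is opposite or consecutive. Your ``cleanest uniform approach'' also leaves the decisive step open: you must guarantee that the chosen $e,e'$ have the same colour (not automatic if $v$ is bichromatic --- though in fact $v$ cannot be bichromatic here, which is exactly the missed lemma), and you never actually prove that one $xy$-walk is colourful. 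The clean reason it \emph{is} colourful is precisely the cut-vertex argument above: $R$ and $G$ are trapped on one side of $v$, so all blue-red and blue-green vertices sit in a single $v$-walk. Once you see that, the subdivision machinery is unnecessary and Observation~\ref{obs:supwal} applies directly.
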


\begin{proof}
We show that an irreducible projection $P$ cannot have a cut vertex, that is, a vertex $v$ such that $P\setminus\{v\}$ is disconnected. This implies the observation, since it follows that every irreducible projection is $2$-connected, and in every spherical embedding of a $2$-connected graph each face is bounded by a cycle.

By way of contradiction, suppose that $P$ is an irreducible projection with a cut vertex $v$. It is easy to see that $v$ is necessarily monochromatic, and without loss of generality we may assume that $v$ is blue. Let $\beta_1,\beta_2$ be the $v$-walks. We note that one of $\beta_1$ and $\beta_2$ must contain all the blue-red vertices and all the blue-green vertices. Indeed, since $v$ is a cut-vertex, otherwise the red straight-ahead walk $R$ and the green straight-ahead walk $G$ would be disjoint, contradicting that $P$ is pairwise crossing.

Without loss of generality we may assume that $\beta_1$ contains all the blue-red vertices and all the blue-green vertices. Thus $\beta_1$ is colourful, and $\beta_2$ is superfluous. In view of Observation~\ref{obs:supwal}, this contradicts the irreducibility of $P$.
\end{proof}

% *************************************************************
% *************************************************************
% *************************************************************
% *************************************************************
% *************************************************************

\section{Good sections in pairwise crossing projections}\label{sec:goodsections}

The proof of Proposition~\ref{pro:twoirr} relies crucially on the concept of a good section. We refer the reader to Figure~\ref{fig:2040} and its caption for an illustration of the upcoming notions. Let $P=B\cup R\cup G$ be a pairwise crossing projection. A cycle of $P$ is {\em facial} if it bounds a face. A {\em section} of a facial cycle $C$ is a path contained in $C$, all of whose edges are of the same colour, and that is maximal with respect to this property. 

% *********************************************************
\def\te#1{{\Scale[3.2]{#1}}}
\def\tf#1{{\Scale[2.8]{#1}}}
\def\tz#1{{\Scale[2.4]{#1}}}
\begin{figure}[ht!]
\centering
\scalebox{0.295}{\input{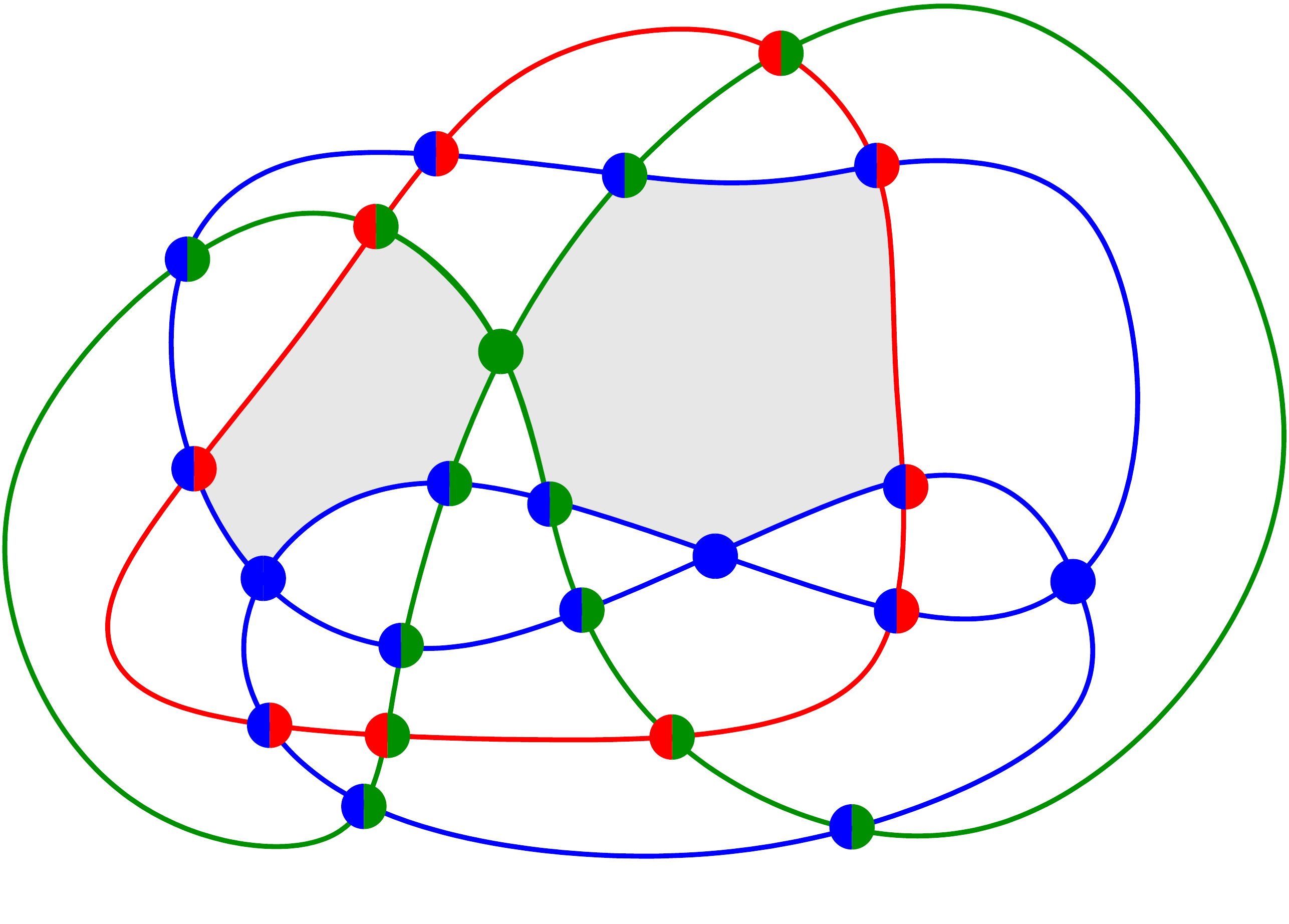_t}}
\caption{The blue path $uaw$ is a section of the facial cycle that bounds face $g$. This section is good, as its endvertices are of distinct types: $u$ is blue-red and $w$ is blue-green. The green path $stz$ is a section of the facial cycle that bounds face $h$, but it is not a good section: its endvertices $s$ and $z$ are of the same type, namely green-blue.}
\label{fig:2040}
\end{figure}
\def\tf#1{{\Scale[2.4]{#1}}}
\def\tz#1{{\Scale[2.0]{#1}}}
% *********************************************************

Clearly, the endvertices of a section $S$ are necessarily bichromatic. If the endvertices of $S$ are of distinct types, then $S$ is a {\em good} section. For instance, in a good blue section one endvertex is blue-red, the other endvertex is blue-green, and all the internal vertices (if any) are blue.

The {\em length} of a section is its number of edges. Note that the length of a section may be one, as it may consist of only one edge and its endvertices.

The next statement plays a crucial role in the proof of Proposition~\ref{pro:twoirr}.

\begin{observation}\label{obs:goodsectionsexist}
Every pairwise crossing projection has at least two good sections of each colour.
\end{observation}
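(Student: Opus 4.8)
The plan is to reduce the statement to the existence of \emph{one} good blue section (the red and green cases being identical by symmetry), and then upgrade ``one'' to ``two'' by a parity count. For the parity count, note that a blue section has exactly two endvertices, each of type blue-red or blue-green, so the blue sections fall into three classes: those with both endvertices blue-red (say $x_B$ of them), both blue-green, or one of each --- the good ones ($s_B$). Double-counting incidences between blue sections and blue-red vertices --- at a blue-red vertex $v$ there are exactly four such incidences, one along each of the two blue edges at $v$ on each of its two sides --- gives $4a = 2x_B + s_B$, where $a$ is the number of blue-red vertices. Hence $s_B$ is even, so $s_B\ge 1$ already implies $s_B\ge 2$. (In fact the single ``impure face'' produced below by itself yields two good blue sections, since a cyclic sequence of types in which both blue-red and blue-green occur has at least two places where consecutive types differ.)

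So it suffices to produce one good blue section, and I would do this by working with the blue curve alone. Let $H$ be the $4$-regular plane graph whose vertices are the monochromatic blue vertices of $P$ and whose edges are the maximal blue subpaths of $B$ containing no monochromatic blue vertex in their interior, each such edge carrying in its interior, as degree-$2$ marked points, the blue-red and blue-green vertices of $P$ that lie on it. The faces of $P$ that touch $B$ refine the faces of $H$, and the key bookkeeping observation is that a maximal blue subpath of a facial cycle of $P$ is precisely a maximal arc, between two consecutive marked points, of the boundary of a face of $H$: at a monochromatic blue vertex the boundary of a face of $P$ turns exactly as the boundary of the ambient face of $H$ does, while at a bichromatic vertex the boundary of a face of $P$ always departs along a non-blue edge, and no red or green edge can touch such an arc away from its endpoints. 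Consequently a good blue section exists if and only if some face $F$ of $H$ carries on its boundary both a blue-red and a blue-green vertex of $P$; call such an $F$ \emph{impure}.

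To produce an impure face I would use that $R$ and $G$ are linked. Suppose, to the contrary, that every face of $H$ is pure. Since $P$ is pairwise crossing, $R$ meets $B$, so $R$ is not contained in the closure of a single face of $H$; being connected, whenever $R$ enters the interior of a face $F$ of $H$ it must cross $\partial F\subseteq B$, and it crosses $B$ only at blue-red vertices, so $\partial F$ carries a blue-red vertex. The symmetric statement holds for $G$ and blue-green vertices. By purity, then, no face of $H$ has its interior met by both $R$ and $G$. But a red-green vertex of $P$ --- which exists because $P$ is pairwise crossing --- is disjoint from the blue subgraph $H$, hence lies in the interior of some face $F$ of $H$; and that $F$ has its interior met by both $R$ and $G$, a contradiction. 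Therefore an impure face exists, and the proof is complete.

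The step I expect to require the most care is the bookkeeping in the second paragraph, identifying blue sections of $P$ with marked arcs of face boundaries of $H$: one must check that along such an arc no red or green edge intervenes and that the face of $P$ on the appropriate side is genuinely bounded by a cycle (here one may invoke $2$-connectedness, so that every face of $P$ is bounded by a cycle, or argue this directly). The remaining steps are short point-set-topological arguments on $\sphere$.
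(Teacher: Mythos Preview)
Your argument is correct and takes a different route from the paper's. The paper deletes the green edges to form an auxiliary graph $P' = B \cup R$ (retaining green-blue and green-red vertices as degree-$2$ marks), observes that the green curve $G$ now lives in the faces of $P'$, and uses this to locate a face of $P'$ whose boundary carries both a green-blue and a green-red mark; decomposing that boundary walk into alternating blue and red subwalks, the paper finds a blue subwalk $W_1$ containing a blue-green vertex, and the portions of $W_1$ from its initial blue-red endpoint to the first blue-green vertex, and from the last blue-green vertex to its terminal blue-red endpoint, are the two good blue sections. Your approach instead strips everything down to the blue curve alone, forming $H = B$, and uses the existence of a red-green crossing (rather than the green curve itself) to force an impure face of $H$; the extraction of sections from consecutive marks of different type on $\partial F$ is arguably the cleaner step, and the parity identity $4a = 2x_B + s_B$ is a pleasant structural bonus absent from the paper (even if, as you note parenthetically, it is not strictly needed once one has an impure face). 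One caveat: your fallback of invoking $2$-connectedness to ensure the relevant face of $P$ is bounded by a cycle does not apply as stated, since the observation is asserted for \emph{all} pairwise crossing projections, not only irreducible ones; the paper's own proof is equally informal on exactly this point, however, so your argument stands at the same level of rigor.
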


\begin{proof}
Let $P=B\cup R\cup G$ be a pairwise crossing projection. By symmetry, it suffices to show that there are at least two good blue sections in $P$. We illustrate the proof using the projection $P$ in Figure~\ref{fig:2040}. As we illustrate in Figure~\ref{fig:2080}(i), we start by letting $P'$ be the plane graph obtained by removing from $P$ all the green edges and all the green vertices, but keeping all the green-blue or green-red vertices.

%We claim that there is a face $f$ of $P'$ whose boundary contains at least one green-blue vertex and at least one green-red vertex. To see this, we note that since $P$ is pairwise crossing then there is at least one green-blue vertex and at least one green-red vertex. This implies that the green straight-ahead closed walk $G$ must contain a (non-closed) straight-ahead walk $G_0$ that starts in a green-blue vertex $a$, ends in a green-red vertex $b$, and such that all its interior vertices are green. In the projection $P$ in Figure~\ref{fig:2040}, we illustrate one such straight-ahead walk $G_0$ that contains exactly two (thick) edges. 

%Since all the interior vertices and all the edges of $G_0$ are green, it follows that all the interior vertices and the edges of $G_0$ are contained in a face $f$ of $P'$: this is the gray face in Figure~\ref{fig:2040}. Note that the initial green-blue vertex $a$ of $G_0$ and the final green-red vertex $b$ of $G_0$ are both in the boundary of $f$.

It is easy to see that since $P$ is pairwise crossing there must exist a face $f$ of $P'$ whose boundary walk $W$ contains at least one green-blue vertex and at least one green-red vertex. See Figure~\ref{fig:2080}(i). In particular, $W$ contains both blue and red edges, and so $W$ may be written as a concatenation $W=W_1 W_2 \cdots W_k$ of an even number $k$ of walks, where $W_1, W_3, \ldots, W_{k-1}$ contain only blue edges, and $W_2, W_4, \ldots, W_k$ contain only red edges. For instance, as we show in Figure~\ref{fig:2080}(ii), the boundary walk $W$ of the face $f$ in Figure~\ref{fig:2080}(i) is the concatenation of a (thick) blue walk $W_1$, a (thick) red walk $W_2$, a (thin) blue walk $W_3$, and a (thin) red walk $W_4$.

% *********************************************************
\def\te#1{{\Scale[3.0]{#1}}}
\def\tf#1{{\Scale[2.6]{#1}}}
\def\tz#1{{\Scale[2.2]{#1}}}
\begin{figure}[ht!]
\centering
\scalebox{0.295}{\input{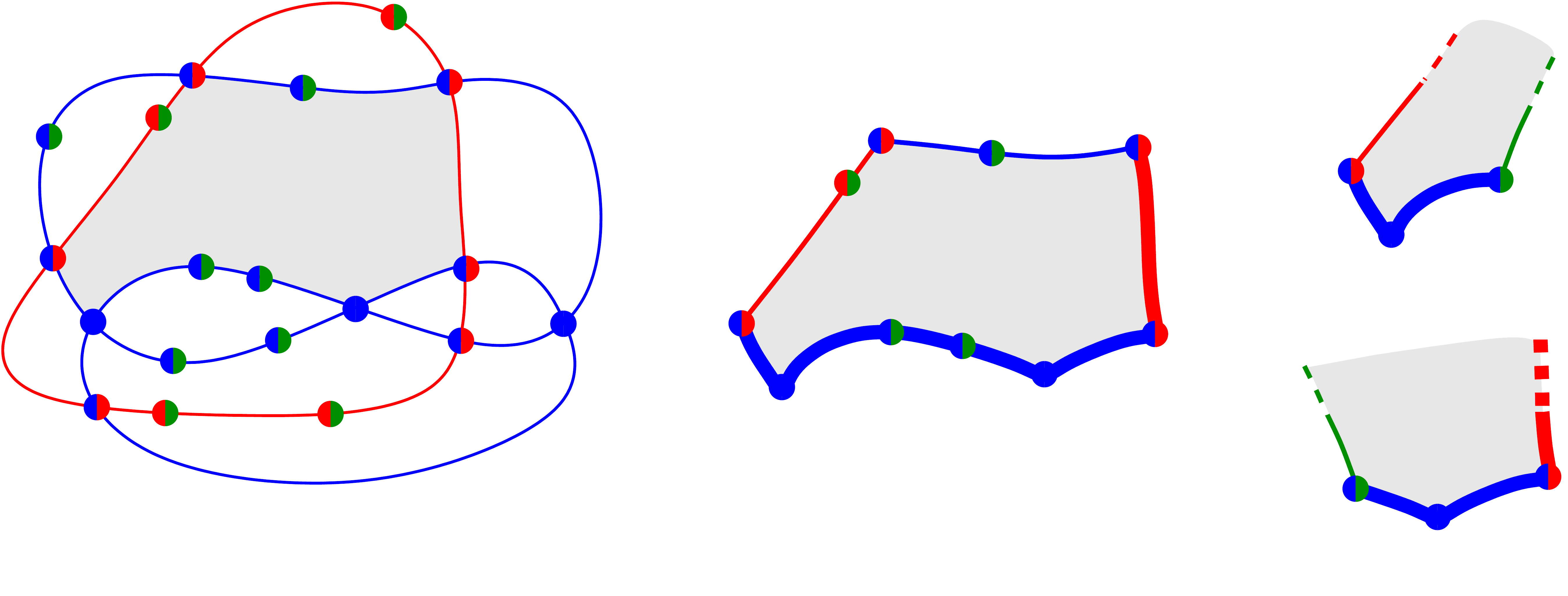_t}}
\caption{Illustration of the proof of Observation~\ref{obs:goodsectionsexist}.}
\label{fig:2080}
\end{figure}
\def\tf#1{{\Scale[2.4]{#1}}}
\def\tz#1{{\Scale[2.0]{#1}}}
% *********************************************************

Since $W$ has at least one blue-green vertex, it follows that one of the blue walks $W_1, W_3, \ldots,$ $W_{k-1}$ contains at least one blue-green vertex. Without loss of generality, we may assume that $W_1$ has at least one blue-green vertex. Note that $W_1$ starts with a blue-red vertex $u$, ends with another blue-red vertex $v$, and every internal vertex of $W_1$ is either blue or blue-green. As we illustrate in Figure~\ref{fig:2080}(ii), we let $w$ (respectively, $z$) be the first (respectively, last) blue-green vertex that we encounter as we traverse $W_1$ from $u$ to $v$. We note that $w$ and $z$ may be the same vertex. 

As we illustrate in Figure~\ref{fig:2080}(iii), the subwalk of $W_1$ that starts at the blue-red vertex $u$ and ends at the blue-green vertex $w$ is a good section of a face $g$ back in $P$ (see also Figure~\ref{fig:2040}). Similarly, as we show in Figure~\ref{fig:2080}(iv), the subwalk of $W_1$ that starts at the blue-green vertex $z$ and ends at the blue-red vertex $v$ is a good section of a face $h$ back in $P$ (see also Figure~\ref{fig:2040}). Thus there are at least two good blue sections in $P$, and so we are done.
\end{proof}

\section{Proof of Proposition~\ref{pro:twoirr}}\label{sec:prooftwoirr}

The proof of Proposition~\ref{pro:twoirr} has two major ingredients, captured in the following two lemmas.

\begin{lemma}\label{lem:nomono}
If $P$ is an irreducible projection, then all good sections of $P$ have length one.
\end{lemma}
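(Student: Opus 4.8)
Assume, for contradiction, that $P$ is irreducible yet admits a good section $S$ of length $m\ge 2$. By the symmetry among the three colours we may take $S$ to be blue, say $S=ux_1x_2\cdots x_{m-1}v$, where $u$ is blue-red, $v$ is blue-green, every $x_i$ is blue monochromatic, and $S$ lies on the cycle $\partial F$ bounding a face $F$ (recall from Observation~\ref{obs:facial} that every face of $P$ is bounded by a cycle). The hypothesis $m\ge 2$ guarantees the existence of the blue monochromatic vertex $x_1$, which is the vertex we exploit.

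The first step is to extract a rigid colouring constraint from $x_1$. Let $\beta_1,\beta_2$ be the two $x_1$-walks, that is, the two blue straight-ahead closed walks through $x_1$, as in the discussion preceding Observation~\ref{obs:supwal}. Since $P$ is irreducible it has no superfluous walks, so by definition neither $\beta_1$ nor $\beta_2$ is colourful; equivalently, each of them misses all blue-red vertices or misses all blue-green vertices. As $B=\beta_1\cup\beta_2$ contains at least two blue-red vertices and at least two blue-green vertices, one of the two $x_1$-walks, say $\beta_1$, must contain every blue-green vertex and no blue-red vertex, while $\beta_2$ contains every blue-red vertex and no blue-green vertex. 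In particular $u\in\beta_2$ and $v\in\beta_1$ (since $u$ is blue-red and $v$ is blue-green), and hence the edge $g_1:=ux_1$ lies in $\beta_2$ and the edge $g_m:=x_{m-1}v$ lies in $\beta_1$.

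The plan is now to turn this colour split into a new, forbidden, reduction. The natural candidate is a shortcut of $B$ along the face $F$: subdivide $g_1$ with a degree-$2$ vertex $x$ and $g_m$ with a degree-$2$ vertex $y$ (both of which lie on $\partial F$), and consider the two resulting $xy$-walks, which partition the edges of $B$. Since $P$ is irreducible, no shortcut is available, so it suffices to show that one of the two $xy$-walks is colourful, i.e.\ contains both a blue-red and a blue-green vertex. Here the colour split of the previous paragraph is the lever: tracing $B$ outward from $x$, in one direction it enters $u$ and then the all-blue-red walk $\beta_2$, while in the other direction it enters $x_1$ and then the all-blue-green walk $\beta_1$, which contains $v$; analysing how the two ends of these walks at $x_1$ are paired with the two ends at $y$ should force a blue-red vertex and a blue-green vertex into a common $xy$-walk, producing the forbidden shortcut. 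As a fallback, whenever the shortcut cannot be mounted, the same local picture at $x_1$ — the placement of the two non-section blue edges at $x_1$ relative to $F$, together with the colour split — should instead exhibit a disposable digon, which is ruled out by Observation~\ref{obs:disdig}.

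I expect the main obstacle to be precisely this pairing analysis, and in particular the degenerate case $m=2$, in which $g_1$ and $g_m=g_2$ meet at $x_1$: there the naive shortcut merely re-separates $B$ into $\beta_1$ and $\beta_2$, neither of which is colourful, and so produces nothing new, forcing a direct argument from the local structure at $x_1$ (and a verification that any shortcut arc can in fact be routed inside $F$). Dealing carefully with this short-section configuration, rather than the generic $m\ge 3$ case, is where the real work of the lemma lies.
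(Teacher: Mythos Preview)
Your colour-split observation at $x_1$ is correct and is, in fact, precisely property~(I) in the paper's treatment of the length-two case. But the proposal does not close either case.

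For $m\ge 3$, the two-point shortcut you set up need not produce a colourful $xy$-walk. With $x$ on $g_1\in\beta_2$ adjacent to $x_1$ and $y$ on $g_m\in\beta_1$, one $xy$-walk consists of (essentially) all of $\beta_2$ together with one of the two pieces of $\beta_1$ cut at $y$; the other $xy$-walk consists only of the remaining piece of $\beta_1$. The latter can never be colourful (it lies in $\beta_1$, hence misses every blue-red vertex), and the former is colourful only if its $\beta_1$-piece happens to contain a blue-green vertex --- which nothing in your setup guarantees. The paper avoids this by using \emph{three} subdivision points $x,y,z$ on $e_1,e_2,e_k$ and a short traversal argument showing one of the $xy$-, $yz$-, or $xz$-walks must miss both $u$ and $v$; the colour split at $x_1$ is not used here at all.

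For $m=2$, your diagnosis that this is the hard case is right, but the fallback to disposable digons cannot succeed. The paper's analysis pushes the colour split much further: it shows each $\beta_i$ is a cycle, pins down what edges can lie inside the disk bounded by $\beta_2$, proves $v$ is the only blue monochromatic vertex, and finally forces $\beta_2$ to be a $3$-cycle $vwq$ with a single green chord $e_w=wq$ inside. At that point $w$ and $q$ are the \emph{only} blue-green vertices of $P$, so the digon they bound is \emph{not} disposable; the contradiction comes instead from recognising $\beta_2+e_w$ as a $\Theta$ and invoking the second reduction operation (splitting the monochromatic vertex $v$). Your plan never mentions the $\Theta$-simplification, and without it the $m=2$ case does not go through.
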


\begin{lemma}\label{lem:simple}
If $P$ is an irreducible projection in which all good sections have length one, then $P$ has exactly $6$ vertices.
\end{lemma}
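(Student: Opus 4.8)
The plan is to combine the structural constraints already established — no disposable digons (Observation~\ref{obs:disdig}), no superfluous walks (Observation~\ref{obs:supwal}), every face bounded by a cycle (Observation~\ref{obs:facial}), and at least two good sections of each colour (Observation~\ref{obs:goodsectionsexist}) — with the hypothesis that every good section has length one, to pin down the number of vertices exactly. First I would set up notation: let $P=B\cup R\cup G$ be irreducible with all good sections of length one, and let $b,r,g$ denote the numbers of blue-red, blue-green, and red-green bichromatic vertices respectively, and let $m$ denote the number of monochromatic vertices. A first observation is that $m=0$: a monochromatic vertex together with the colourfulness forced by pairwise crossing would produce a superfluous walk unless both $v$-walks are colourful, so I would argue that in an irreducible projection a monochromatic vertex forces \emph{both} $v$-walks to be colourful, and then show (using planarity and a counting/exchange argument, or by exhibiting a disposable digon or a good section of length $>1$) that this situation cannot persist — more carefully, I expect one shows directly that any monochromatic vertex can be removed by one of the two reduction operations, so $m=0$ and $P$ has exactly $b+r+g$ vertices.

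Next I would exploit the length-one condition. Since every good section has length one, and by Observation~\ref{obs:goodsectionsexist} there are at least two good blue sections, there are at least two faces each of which has on its boundary a single blue edge joining a blue-red vertex to a blue-green vertex. The key consequence I want to extract is an upper bound on $b$, $r$, and $g$ individually — I would aim to show $b=r=g=2$. For this I would analyze the cyclic sequence of bichromatic vertices around the blue walk $B$: travelling along $B$ we meet blue-red and blue-green vertices in some cyclic order, and between two consecutive bichromatic vertices on $B$ lies a blue subpath (a ``segment'' of $B$). Using Observation~\ref{obs:facial} (faces are cycles) together with the absence of disposable digons, I would argue that the faces incident to these blue segments are highly constrained; in particular a good blue section being forced to have length one should force that blue-red and blue-green vertices alternate along $B$ in a very tight pattern, and then the ``at least two of each'' bounds from Observation~\ref{obs:goodsectionsexist} combined with an Euler-formula / face-counting argument for the planar $4$-regular graph $P$ should squeeze everything to exactly two vertices of each bichromatic type. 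Concretely, with $V=b+r+g$ vertices, $P$ has $E=2V$ edges and $F=V+2$ faces by Euler's formula; I would bound $F$ from below in terms of $b,r,g$ by counting good sections and the faces they bound, and bound it from above using that each face is a cycle of length $\ge 2$ (no monogons, and digons are non-disposable hence rare), forcing $V\le 6$; the reverse inequality $V\ge 6$ comes from $b,r,g\ge 2$.

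The main obstacle I anticipate is the inequality $V \le 6$: showing that once all good sections have length one, there cannot be "extra" bichromatic vertices. The subtlety is that a priori one could imagine many blue-red vertices strung along $B$ with all the good sections still short; ruling this out requires carefully tracking which faces are bounded by which sections and showing that a surplus vertex always creates either a disposable digon, a superfluous walk, or a good section of length $\ge 2$ — i.e., that the three reduction obstructions together are strong enough. I would handle this by a minimal-counterexample / extremal argument: assume $V>6$, pick (say) a blue-red vertex whose removal neighbourhood is "generic," and trace the straight-ahead walks through it to locate a face that must be a digon or a good section of length $>1$. Once $b=r=g=2$ and $m=0$ are established, $V=6$ follows immediately, completing the proof; the identification of the two resulting projections as exactly $P_1,P_2$ is then the content of Proposition~\ref{pro:twoirr}, not of this lemma.
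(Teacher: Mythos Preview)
Your overall architecture matches the paper's: first show there are no monochromatic vertices, then use Euler's formula on the $4$-regular planar graph together with face-degree constraints to force $V\le 6$, and combine with $b,r,g\ge 2$ for $V\ge 6$. But your argument for $m=0$ is muddled. You write that irreducibility at a monochromatic vertex $v$ ``forces both $v$-walks to be colourful''; in fact it forces \emph{neither} to be colourful (if one were colourful the other would be superfluous, contradicting Observation~\ref{obs:supwal}). The clean way through --- and this is what the paper does --- is to use the hypothesis directly: a good blue section of length one is a single blue edge with one blue-red and one blue-green endpoint, so whichever $v$-walk contains that edge is automatically colourful, giving the contradiction in one line. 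You never explicitly invoke the length-one hypothesis in your $m=0$ sketch, and without it the argument does not close.

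For the Euler step, your plan is too vague to succeed as stated; a ``minimal-counterexample / extremal'' chase is not what is needed. The paper's argument is more structured: it shows (i) at most three faces can have degree $\ge 4$ (two faces each with two blue edges, plus a third good blue edge outside one of those pairs, would allow a shortcut), and (ii) if three or more faces have degree $2$ then the non-disposable-digon constraint already pins $V=6$. With $n$ vertices and $n+2$ faces, these two facts force at least $n-3$ faces of degree exactly $3$; when $n\ge 8$ that gives at least five triangles, and since every triangle (having no monochromatic vertices) consists of one good edge of each colour, there are at least three good blue edges --- which then kills \emph{any} face of degree $\ge 4$ by the same shortcut trick, forcing $4n=\sum|f_i|\le 3(n+2)$. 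The crucial observation you are missing is that a degree-$3$ face is built entirely from good edges; this is what converts ``many small faces'' into ``many good edges'' and drives the contradiction.
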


Before moving on to the proofs of Lemmas~\ref{lem:nomono} and~\ref{lem:simple}, we note that Proposition~\ref{pro:twoirr} follows easily from them.

\begin{proof}[Proof of Proposition~\ref{pro:twoirr}, assuming Lemmas~\ref{lem:nomono} and~\ref{lem:simple}]
Let $P=B\cup R\cup G$ be an irreducible projection. Combining Lemmas~\ref{lem:nomono} and~\ref{lem:simple} we obtain that $P$ has exactly $6$ vertices. Since every pairwise projection has at least two bichromatic vertices of each type, it follows that $P$ has exactly two blue-red vertices, exactly two blue-green vertices, exactly two red-green vertices, and no monochromatic vertices of any colour. That is, $B,R$, and $G$ are cycles that pairwise cross each other in exactly two vertices. It is a straightforward exercise to verify that then $P$ is equivalent to either $P_1$ or $P_2$ in Figure~\ref{fig:670}: this is the well-known fact that every arrangement of three pseudocircles that pairwise cross exactly twice is equivalent to either the Krupp arrangement or to the non-Krupp arrangement (see ~\cite[Figure 1]{felsnerscheucher}).
\end{proof}

\subsection{Proof of Lemma~\ref{lem:nomono}}

Lemma~\ref{lem:nomono} is an immediate consequence of the next two statements.

\begin{claim}\label{cla:ge3}
In an irreducible projection there cannot be any good section of length at least three.
\end{claim}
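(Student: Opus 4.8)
Suppose for contradiction that $P$ is an irreducible projection containing a good section $S$ of length at least three. By symmetry we may assume $S$ is blue, so $S$ is a path $u\,e_1\,p_1\,e_2\,p_2\cdots p_{m-1}\,e_m\,v$ with $m\ge 3$, where $u$ is blue-red, $v$ is blue-green, and every internal vertex $p_1,\dots,p_{m-1}$ is either blue or blue-green. Since $S$ bounds a face, it is an honest path with no repeated vertices. The plan is to find a disposable digon or a superfluous walk forced by the presence of three or more consecutive blue edges along a face, thereby contradicting Observation~\ref{obs:disdig} or Observation~\ref{obs:supwal}.

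\textbf{Key steps.} First I would look at an internal blue edge of $S$, say $e_2=p_1p_2$, together with the face $f$ on the other side of $e_2$ (the side not bounded by $S$). Since $p_1$ and $p_2$ are each either monochromatic blue or blue-green, each of them has its ``blue pair'' of edges, one of which is $e_2$; the opposite blue edge at $p_i$ continues the blue straight-ahead walk. I would trace the blue straight-ahead walk $B$ through $p_1$ and through $p_2$ and examine how it re-enters the vicinity of $e_2$. The crucial observation is that because $S$ is a section (maximal monochromatic subpath of a facial cycle) of length $\ge 3$, the portion of $B$ running along $S$ is ``locally parallel'' to itself on the $f$-side in a way that, after following straight-ahead walks, produces two parallel edges bounding an empty disk — a digon — or else produces a monochromatic vertex whose two $v$-walks split so that one of them is trivial enough to be superfluous. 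Concretely, I expect that the subwalk of $B$ consisting of the three edges $e_1,e_2,e_3$ (indices along $S$) together with the return path of $B$ forces either: (i) a digon $e_i\cup e_j$ between two of these blue edges and some blue edge of the returning strand, bounding a face with no other part of $P$ inside (this is disposable since the endpoints are monochromatic blue or could be handled by the bichromatic clause of Observation~\ref{obs:disdig}); or (ii) one of the internal vertices $p_i$ is monochromatic blue and splits $B$ into two $v$-walks one of which misses all blue-red and blue-green vertices, i.e. is superfluous, contradicting Observation~\ref{obs:supwal}.

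To organize this, I would split into cases according to how many of the internal vertices $p_1,\dots,p_{m-1}$ are monochromatic blue versus blue-green. If at least one internal vertex $p_i$ is monochromatic blue, I would argue directly with the $v$-walks at $p_i$: the blue straight-ahead walk enters the region cut off by $S$ and $f$, and since no bichromatic vertices of the blue-red or blue-green type can lie strictly between $u$ and $v$ along $S$ other than possibly more copies, one of the two $v$-walks at $p_i$ contains neither a blue-red nor a blue-green vertex, making it superfluous. If instead \emph{every} internal vertex of $S$ is blue-green (so $S=u\,w_1\,w_2\cdots w_{m-1}\,v$ with all $w_i$ blue-green and $m\ge 3$), then I would focus on two consecutive blue-green vertices $w_1,w_2$ and the green straight-ahead walk through them, producing a green digon or a green superfluous walk by the same reasoning transposed to green; here the bichromatic clause (ii) of ``disposable'' is what I'd need, using Observation~\ref{obs:goodsectionsexist} to know there are at least two good sections of each colour so $w_1,w_2$ are not the only bichromatic vertices of their type. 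The case $m=3$ with exactly one internal vertex should be the base case and is handled by whichever of the two sub-arguments applies.

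\textbf{Main obstacle.} The hard part will be the careful topological bookkeeping in step one: verifying that the disk cut off by the section $S$ and the re-entering strand of the relevant straight-ahead walk really contains no other part of $P$, so that the digon is genuine (the ``empty disk'' condition in the definitions of digon and of $\Theta$) or that the purported $v$-walk really is superfluous. This requires using that $S$ is \emph{facial} (so one side is literally a face with nothing in it) and chasing the straight-ahead rule at each of the $\ge 2$ internal vertices to pin down exactly where the walk goes; an unlucky configuration where the returning strand threads between internal vertices could a priori fill the disk with other edges, and ruling this out cleanly — probably by taking the \emph{innermost} such disk or the \emph{first} return — is the delicate point I would spend the most care on.
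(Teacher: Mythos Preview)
Your plan has a genuine gap, and it also rests on a false dichotomy.

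First, the case split is illusory. In an irreducible projection every face is bounded by a cycle (Observation~\ref{obs:facial}), so at each internal vertex $p_i$ of $S$ the two edges $e_i,e_{i+1}$ of the facial cycle are \emph{consecutive} in the rotation at $p_i$. At any bichromatic vertex the rotation alternates colours (blue--X--blue--X), so two consecutive edges there have different colours. Hence every internal vertex of a blue section is monochromatic blue; the ``all internal vertices blue-green'' branch of your plan is vacuous.

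Second, and more seriously, the step you flag as routine is exactly where the argument breaks. At a monochromatic internal vertex $p_i$ the two $p_i$-walks need not separate the bichromatic vertices the way you want: one walk may pick up the blue-red endpoint $u=v_0$ and the other the blue-green endpoint $v=v_k$, with neither walk colourful and hence neither superfluous. Concretely, cutting $B$ at its two visits to $p_1$ gives arcs whose incident edges at $p_1$ are $\{e_1,b\}$ and $\{e_2,a\}$ (not the straight-ahead pairs $\{e_1,a\}$, $\{e_2,b\}$), so the arc through $e_1$ contains $v_0$ while the arc through $e_2$ may be the one that eventually reaches $v_k$; irreducibility then only tells you that the first arc has no blue-green vertex and the second no blue-red vertex, which is consistent and yields no contradiction. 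Your ``region cut off by $S$ and $f$'' heuristic does not pin this down, because the straight-ahead walk leaves $S$ immediately at $p_1$ (the edge opposite $e_1$ is not $e_2$) and can wander globally before returning.

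The paper's proof avoids all of this by working directly with the shortcut operation rather than with superfluous walks or digons. It subdivides \emph{three} edges of $S$ on the common face---$e_1,e_2,e_k$---at points $x,y,z$, then traces $B$ from $x$ toward $v_1$ and does a three-way case analysis on which of $y,z,v_k$ is met first. In every case one of the pairs $\{x,y\},\{x,z\},\{y,z\}$ has a colourful walk (the complementary arc contains both $v_0$ and $v_k$), so $P$ can be shortcut. The third subdivision point is essential: with only $x,y$ the case ``$v_k$ before $y$'' is exactly the obstruction described above.
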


\begin{claim}\label{cla:eq2}
In an irreducible projection there cannot be any good section of length exactly two.
\end{claim}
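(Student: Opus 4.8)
\textbf{Plan of proof for Claim~\ref{cla:eq2}.}
The plan is to argue by contradiction: suppose $P$ is an irreducible projection containing a good section $S$ of length exactly two, say a blue section with blue-red endvertex $u$, blue-green endvertex $w$, and a single internal vertex $m$, which must be a (monochromatic) blue vertex. Since $m$ is monochromatic, it is the meeting point of the two $m$-walks $\beta_1,\beta_2$ into which the blue straight-ahead closed walk $B$ decomposes. The two blue edges incident with $m$ inside $S$ both belong to the \emph{same} $m$-walk (they are opposite at $m$ because they are consecutive on a straight-ahead walk); call it $\beta_1$. By Observation~\ref{obs:supwal}, $P$ has no superfluous walk, so $\beta_2$ cannot be superfluous, which forces $\beta_1$ \emph{not} to be colourful: $\beta_1$ misses all blue-red vertices, or misses all blue-green vertices. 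But $\beta_1$ contains the segment $u\,m\,w$ of $S$, hence it contains the blue-red vertex $u$ and the blue-green vertex $w$ — so $\beta_1$ \emph{is} colourful, a contradiction. The heart of the argument is thus purely combinatorial once one checks which of the four half-edges at $m$ pair up along straight-ahead walks; the whole claim reduces to Observation~\ref{obs:supwal}.

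The one point that needs care — and which I expect to be the main obstacle — is the claim that the two edges of $S$ incident with $m$ lie on the \emph{same} $m$-walk rather than on the two different ones. This is exactly the straight-ahead property: at the degree-$4$ vertex $m$, the straight-ahead walk through $m$ enters along one edge and leaves along the opposite edge. Since $S$ is a path in a facial cycle and $m$ is an internal vertex of $S$, the two edges of $S$ at $m$ are consecutive on that facial cycle, but I must verify that for a \emph{monochromatic} vertex the two blue edges forming a section-corner are opposite (not adjacent) at $m$; if they were the two adjacent edges meeting at $m$ from the same side, then $S$ would be traversing a ``turn'' at $m$, which is impossible precisely because all of $S$'s edges are blue and at a monochromatic blue vertex there is a unique straight-ahead continuation within blue. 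So: entering $m$ along a blue edge of $S$, the straight-ahead rule forces the continuation to be the opposite edge, which is the other blue edge of $S$ (it is the only other blue edge incident with $m$, since $m$ has exactly four incident edges, two of which are the section edges). Hence both section edges at $m$ lie on a single straight-ahead walk starting and ending at $m$, i.e.\ on one $m$-walk.

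With this established the rest is immediate. Formally: let $\beta_1$ be the $m$-walk containing the two blue edges of $S$ at $m$. Traversing $\beta_1$ starting from $m$ along one of these edges, we first reach (after possibly some purely blue internal vertices of $S$) the endvertex of $S$ on that side; doing the same on the other side reaches the other endvertex of $S$. One of those endvertices is $u$ (blue-red) and the other is $w$ (blue-green), so $\beta_1$ contains at least one blue-red vertex and at least one blue-green vertex; that is, $\beta_1$ is colourful. Then $\beta_2$ is superfluous, contradicting Observation~\ref{obs:supwal}. By the symmetry among the three colours (noted in the excerpt), the same argument rules out good red and good green sections of length two, completing the proof of Claim~\ref{cla:eq2}.

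\begin{proof}
Suppose, for a contradiction, that an irreducible projection $P=B\cup R\cup G$ contains a good section of length exactly two. By the symmetry among the three colours, we may assume this section $S$ is blue, with blue-red endvertex $u$, blue-green endvertex $w$, and a single internal vertex $m$. Since $m$ is an internal vertex of a blue section, all four edges incident with $m$ include the two blue edges of $S$ at $m$; as $m$ is a vertex of $P$ of degree $4$, this means $m$ is incident with at least two blue edges. If $m$ were bichromatic, its two blue edges would be the two edges of a straight-ahead walk through $m$, hence opposite at $m$; since $S$ is a path, its two edges at $m$ would then be opposite at $m$, which is fine, but then the remaining two edges at $m$ would be of another colour, contradicting that $m$ is an internal vertex of a \emph{blue} section (an internal vertex of a blue section is incident only with blue edges of the relevant facial cycle and lies on it with two blue edges; in fact, by definition of a section, the internal vertices are blue). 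Hence $m$ is monochromatic blue, and all four edges at $m$ are blue.

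Because $m$ is monochromatic, the blue straight-ahead closed walk $B$ is the edge-disjoint union of the two $m$-walks $\beta_1,\beta_2$, each starting and ending at $m$. The two edges of $S$ incident with $m$ are consecutive on the facial cycle containing $S$; being both blue and incident with the degree-$4$ vertex $m$, and since at $m$ each incoming edge has a unique straight-ahead (opposite) continuation, these two edges are opposite at $m$ and therefore lie on a single straight-ahead walk through $m$, that is, on one of $\beta_1,\beta_2$. Say they lie on $\beta_1$.

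Traversing $\beta_1$ from $m$ along one of these two edges, we move along $S$ (through possibly several blue internal vertices) until we reach an endvertex of $S$; traversing $\beta_1$ from $m$ along the other edge reaches the other endvertex of $S$. One endvertex is $u$, which is blue-red, and the other is $w$, which is blue-green. Hence $\beta_1$ contains a blue-red vertex and a blue-green vertex, i.e.\ $\beta_1$ is colourful, and therefore $\beta_2$ is superfluous. This contradicts Observation~\ref{obs:supwal}, which asserts that an irreducible projection has no superfluous walks.
\end{proof}
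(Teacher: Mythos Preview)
Your argument has a fatal error at the step where you claim the two section edges at $m$ are \emph{opposite} at $m$. They are not: since $S$ is a subpath of a \emph{facial} cycle, the two edges of $S$ incident with the internal vertex $m$ are \emph{consecutive in the rotation} at $m$, i.e.\ adjacent, not opposite. (At a degree-$4$ vertex with cyclic edge order $e_1,e_2,e_3,e_4$, each incident face uses a consecutive pair $\{e_i,e_{i+1}\}$; the straight-ahead pairing is $\{e_1,e_3\}$, $\{e_2,e_4\}$.) Your parenthetical ``it is the only other blue edge incident with $m$'' is also wrong: $m$ is monochromatic blue, so \emph{all four} edges at $m$ are blue, not just the two section edges.

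Because the two section edges $e,e'$ are an adjacent pair at $m$, there is no guarantee they lie in the same $m$-walk; which adjacent pair each $m$-walk uses is determined globally by $B$. In fact the situation is the opposite of what you want: if $e,e'$ \emph{did} lie in the same $m$-walk $\beta_1$, then $\beta_1$ would contain both $u$ and $w$ and be colourful, so $\beta_2$ would be superfluous---contradicting irreducibility. Hence in an irreducible projection with such a section, $e$ and $e'$ necessarily lie in \emph{different} $m$-walks, with $u\in B_1$ and $w\in B_2$, and your short argument collapses. The paper's proof starts precisely from this configuration and requires substantially more work: it establishes five structural facts (that $B_1$ has no blue-green vertices and $B_2$ no blue-red vertices; that $B_1,B_2$ are cycles; constraints on edges inside the disk bounded by $B_2$; that $v$ is the only blue vertex; and that no green vertex lies inside that disk), and from these deduces the existence of a $\Theta$, contradicting irreducibility via the second reduction operation.
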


\begin{proof}[Proof of Claim~\ref{cla:ge3}]
By way of contradiction, suppose that $P=B\cup R\cup G$ has a facial cycle $C$ with a good section $S=v_0 e_1 v_1 \ldots e_k v_k$ of length $k\ge 3$. Recall that the endvertices of every good section are bichromatic vertices of distinct types. Without loss of generality we may assume that $S$ is blue, and that $v_0$ is blue-red and $v_k$ is blue-green. See Figure~\ref{fig:230} for an illustration.

% *********************************************************
\def\te#1{{\Scale[3.4]{#1}}}
\def\tf#1{{\Scale[3.4]{#1}}}
\def\tz#1{{\Scale[3.2]{#1}}}
\begin{figure}[ht!]
\centering
\scalebox{0.23}{\input{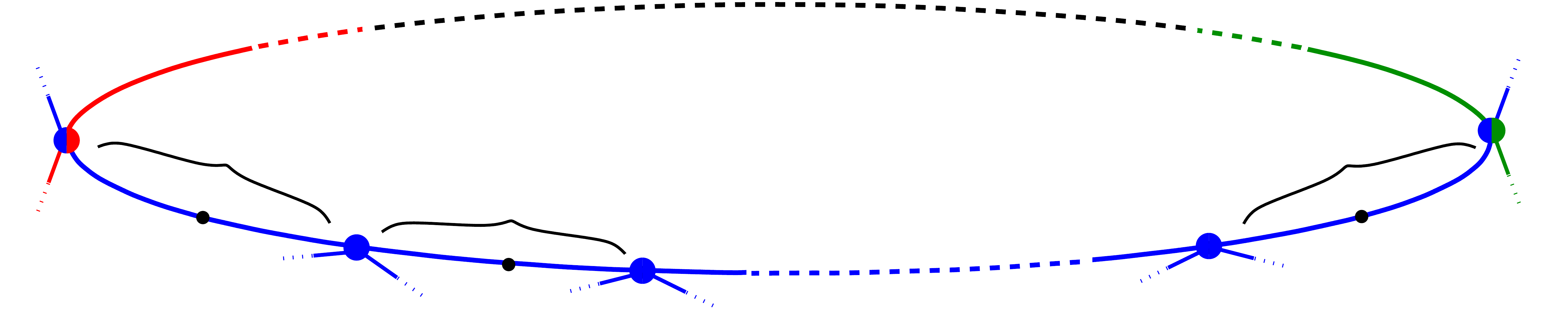_t}}
\caption{Illustration of the proof of Claim~\ref{cla:ge3}.}
\label{fig:230}
\end{figure}
\def\tf#1{{\Scale[2.4]{#1}}}
\def\tz#1{{\Scale[2.0]{#1}}}
% *********************************************************

Subdivide $e_1$ with a degree $2$ vertex $x$, $e_2$ with a degree $2$ vertex $y$, and $e_k$ with a degree $2$ vertex $z$. To prove the claim we show that either (i) at least one of the two $xy$-walks is colourful; or (ii) at least one of the two $yz$-walks is colourful; or (iii) at least one of the two $xz$-walks is colourful. This will complete the proof, since in either case we can apply a shortcut operation on $P$, contradicting its irreducibility.

We traverse the blue straight-ahead closed walk $B$ starting at $x$ with the blue half-edge $x v_1$. Suppose that in this traversal we encounter $y$ before we encounter $v_k$. In this case the part of the traversal from $x$ to $y$ (that is, one of the two $xy$-walks) clearly contains neither $v_0$ nor $v_k$. Thus the other $xy$-walk contains both $v_0$ and $v_k$, and so it is colourful. Therefore in this case (i) holds. A totally analogous argument shows that if in the traversal we encounter $z$ before we encounter $v_k$, then one of the two $xz$-walks is colourful, and so (ii) holds. 

We may then assume that in the traversal of $B$ starting with $x v_1$ we encounter first $v_k$, and then we encounter $y$ and $z$ in some order. In this case the part of the traversal that has $y$ and $z$ as endpoints is a $yz$-walk that contains neither $v_0$ nor $v_k$. Thus the other $yz$-walk contains both $v_0$ and $v_k$, and so it is colourful. Therefore in this case (iii) holds.
\end{proof}

\begin{proof}[Proof of Claim~\ref{cla:eq2}]
By way of contradiction, suppose that $P=B\cup R\cup G$ has a facial cycle $C$ with a good section $S=u e v e' w$ of length $2$. Without loss of generality we may assume that $S$ is blue, and that $u$ is blue-red and $w$ is blue-green. See Figure~\ref{fig:1140} for an illustration. As we also illustrate in that figure, we let $B_1$ denote the $v$-walk that contains $u$ and $e$, and we let $B_2$ denote the other $v$-walk, which is the one that contains $w$ and $e'$. 

% *********************************************************
\def\te#1{{\Scale[3.4]{#1}}}
\def\tf#1{{\Scale[2.8]{#1}}}
\def\tz#1{{\Scale[2.4]{#1}}}
\begin{figure}[ht!]
\centering
\scalebox{0.3}{\input{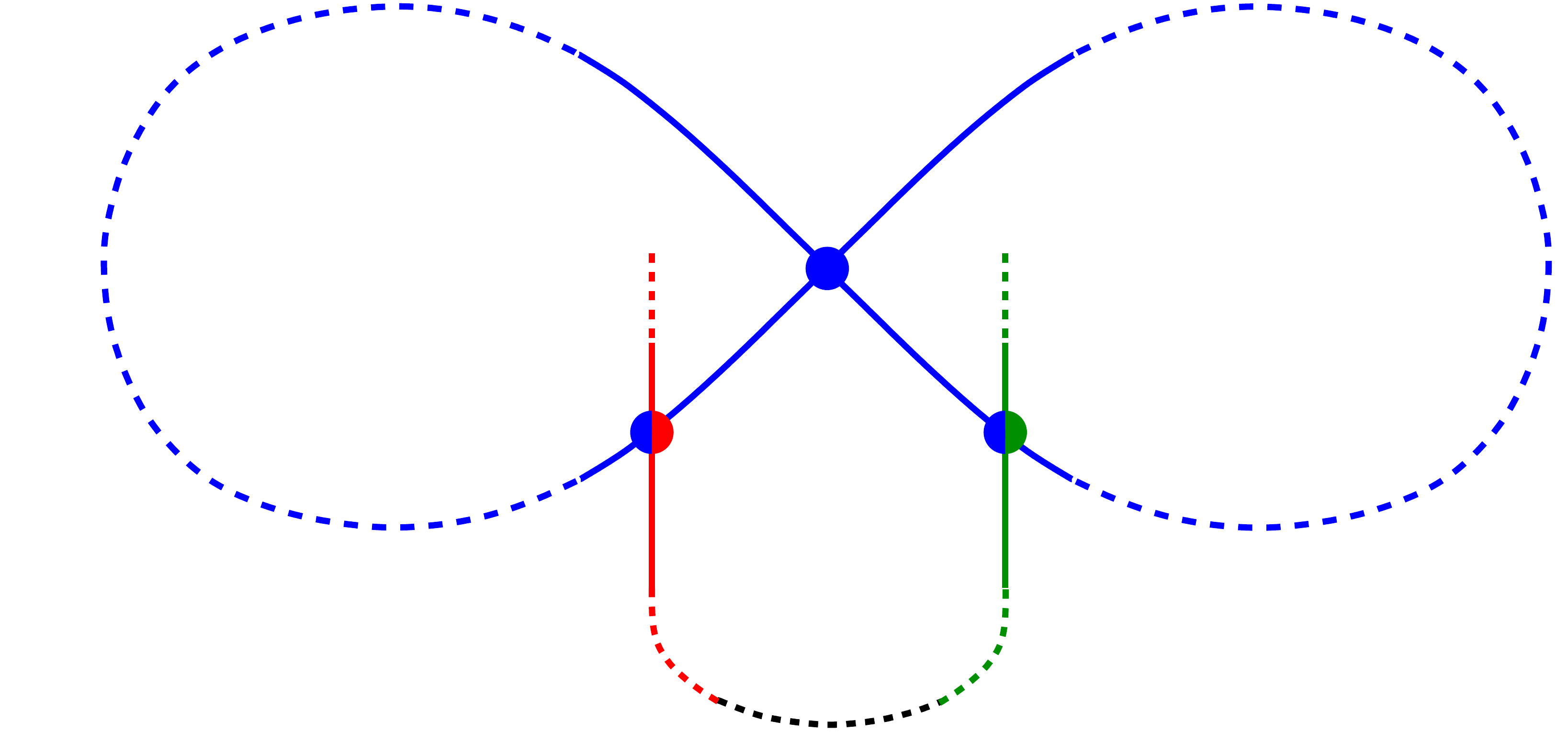_t}}
\caption{Illustration of the proof of Claim~\ref{cla:eq2}.}
\label{fig:1140}
\end{figure}
\def\tf#1{{\Scale[2.4]{#1}}}
\def\tz#1{{\Scale[2.0]{#1}}}
% *********************************************************

Our strategy to prove the claim is to find structural properties of $P$ that follow from its irreducibility. These properties are captured as statements (I), (II), (III), (IV), and (V) below. As we argue after the proof of (V), these properties imply the existence of a $\Theta$ in $P$ (see Figure~\ref{fig:1540}). This will conclude the proof of the claim, since the existence of a $\Theta$ contradicts the irreducibility of $P$.

\vglue 0.2 cm
\noindent{(I) {\sl $B_1$ does not contain any blue-green vertex, and $B_2$ does not contain any blue-red vertex.}}

\begin{proof}
By way of contradiction, suppose that $B_1$ contains some blue-green vertex. Since $B_1$ contains also the blue-red vertex $u$, it follows that $B_1$ is a colourful $v$-walk, and so $B_2$ is a superfluous $v$-walk. In view of Observation~\ref{obs:supwal}, this contradicts the irreducibility of $P$. A totally analogous argument shows that $B_2$ cannot contain any blue-red vertex.
\end{proof}

\vglue 0.2 cm
\noindent{(II) {\sl $B_1$ is a cycle, and $B_2$ is a cycle.}}

\begin{proof}
By symmetry, it suffices to show that $B_2$ is a cycle. By way of contradiction, suppose that this is not the case. As we illustrate in Figure~\ref{fig:1290}, then there must exist a blue vertex $z$ in $B_2$ such that the four edges incident with $z$ are in $B_2$. As we also illustrate in that figure, the $z$-walk that contains $v$ also contains $u$ and $w$, and so it is colourful. Thus the other $z$-walk (the thick one) is superfluous. In view of Observation~\ref{obs:supwal}, this contradicts the irreducibility of $P$. 
\end{proof}

% *********************************************************
\def\te#1{{\Scale[3.4]{#1}}}
\def\tf#1{{\Scale[2.8]{#1}}}
\def\tz#1{{\Scale[2.4]{#1}}}
\begin{figure}[ht!]
\centering
\scalebox{0.3}{\input{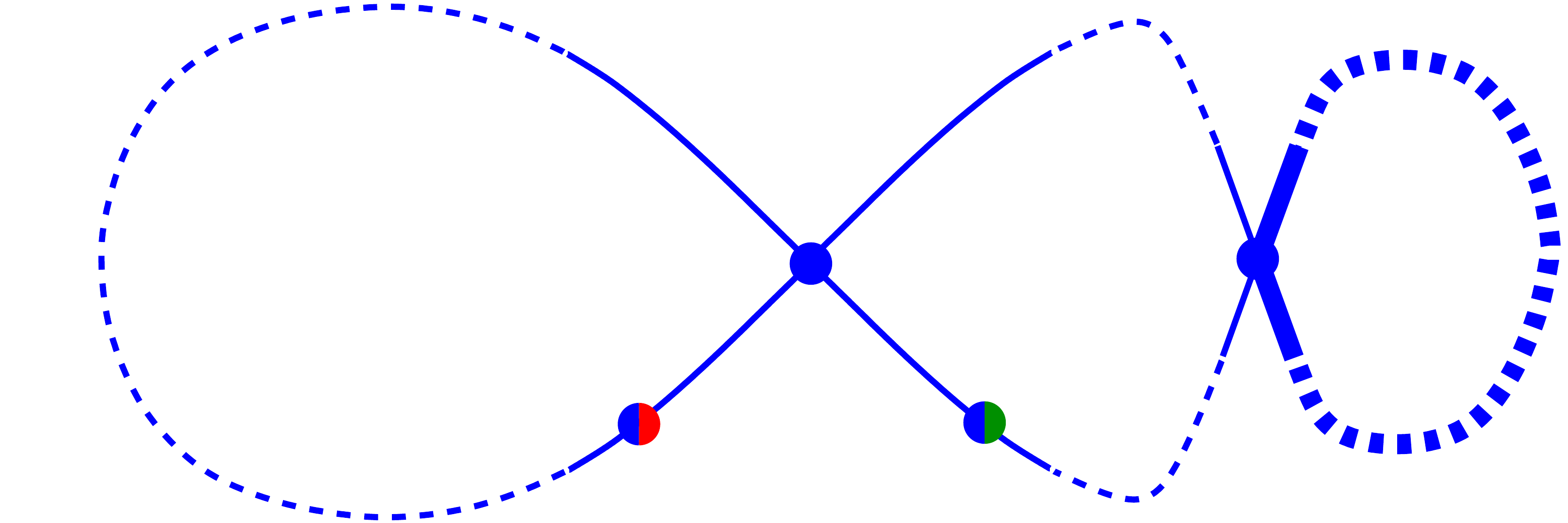_t}}
\caption{Illustration of the proof of (II) in Claim~\ref{cla:eq2}.}
\label{fig:1290}
\end{figure}
\def\tf#1{{\Scale[2.4]{#1}}}
\def\tz#1{{\Scale[2.0]{#1}}}
% *********************************************************

Since $B_2$ is a cycle, by the Jordan curve theorem $\sphere\setminus B_2$ has two connected components, each of which is homeomorphic to an open disk. We let $\Delta$ denote the connected component of $\sphere\setminus B_2$ that does not contain the blue-red vertex $u$. We refer the reader to Figure~\ref{fig:1340} for an illustration, where $\Delta$ is the shaded region.

\vglue 0.2 cm
\noindent{(III) {\sl If $g=st$ is an edge of $P$ contained in $\Delta$, and its endvertices $s$ and $t$ are both in $B_2$, then either $s=w$ or $t=w$.}}

\begin{proof}
By way of contradiction, suppose that $s{\neq}w$ and $t{\neq}w$. See Figure~\ref{fig:1340}. As we also illustrate in that figure, we subdivide one of the two edges of $B_2$ incident with $s$ with a degree $2$ vertex $x$, and we subdivide one of the two edges of $B_2$ incident with $t$ with a degree $2$ vertex $y$. 

% *********************************************************
\def\te#1{{\Scale[3.4]{#1}}}
\def\tf#1{{\Scale[2.8]{#1}}}
\def\tz#1{{\Scale[2.4]{#1}}}
\begin{figure}[ht!]
\centering
\scalebox{0.28}{\input{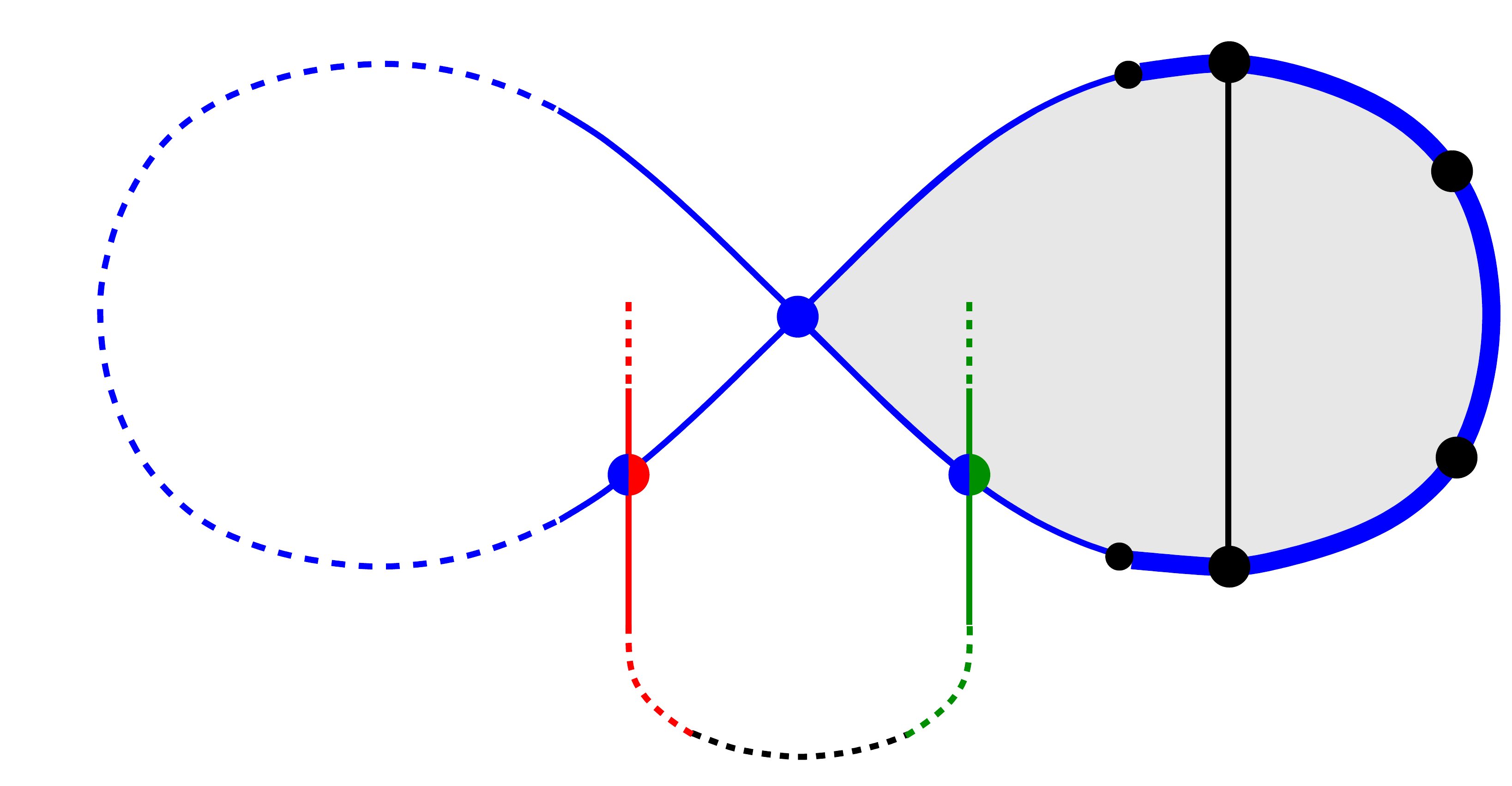_t}}
\caption{Illustration of the proof of (III) in Claim~\ref{cla:eq2}.}
\label{fig:1340}
\end{figure}
\def\tf#1{{\Scale[2.4]{#1}}}
\def\tz#1{{\Scale[2.0]{#1}}}
% *********************************************************

Since $g$ is an edge, then $x$ and $y$ are in the same face of $P$. Now one of the $xy$-walks is completely contained in $B_2$: this is the $xy$-walk highlighted with thick edges in Figure~\ref{fig:1340}. The other $xy$-walk contains $u$ and $w$, and so it is colourful. Since $x$ and $y$ are in the same face it follows that we can shortcut $P$ at $x$ and $y$. But this contradicts the irreducibility of $P$.
\end{proof}

\vglue 0.2 cm
\noindent{(IV) {\sl The only blue vertex is $v$.}}
\vglue 0.2 cm

\begin{proof}
By way of contradiction, suppose that there is a blue vertex $s$ distinct from $v$. Since each of $B_1$ and $B_2$ is a cycle, it follows that $s$ must be in $B_1\cap B_2$. As we illustrate in Figure~\ref{fig:1410}, it follows that there must be a subpath $Q$ of $B_1$, contained in $\Delta$, that has $s$ as one of its endvertices. We let $t$ denote the endvertex of $Q$ that is not $s$.

% *********************************************************
\def\te#1{{\Scale[3.4]{#1}}}
\def\tf#1{{\Scale[2.8]{#1}}}
\def\tz#1{{\Scale[2.4]{#1}}}
\begin{figure}[ht!]
\centering
\scalebox{0.28}{\input{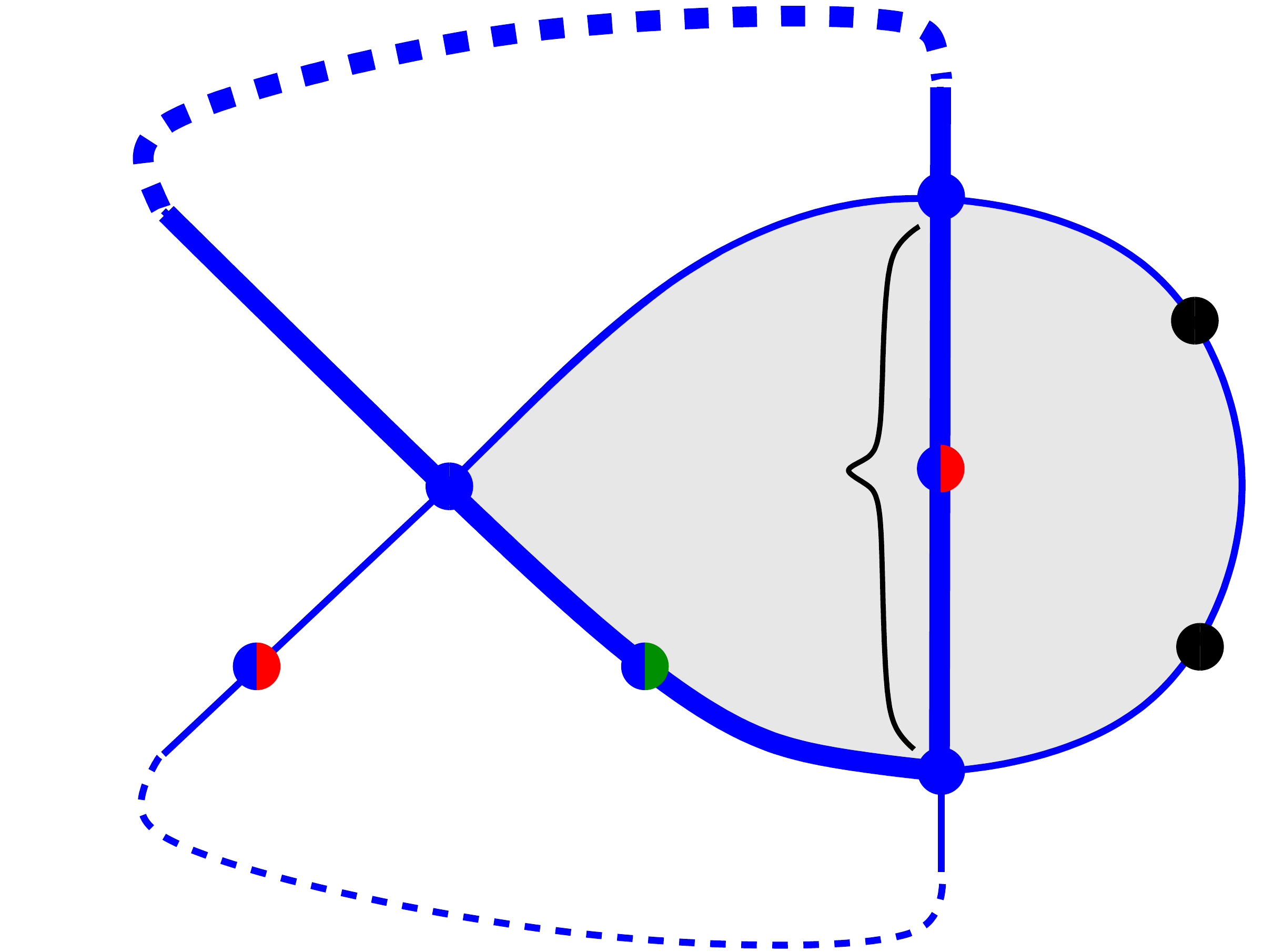_t}}
\caption{Illustration of the proof of (IV) in Claim~\ref{cla:eq2}.}
\label{fig:1410}
\end{figure}
\def\tf#1{{\Scale[2.4]{#1}}}
\def\tz#1{{\Scale[2.0]{#1}}}
% *********************************************************

It follows from (III) that $Q$ cannot consist of a single edge, and so there is at least one internal vertex $z$ of $Q$. We claim that $z$ must be blue-red, as illustrated in Figure~\ref{fig:1410}. To see this, first we note that $z$ cannot be blue, since every blue vertex is in $B_1\cap B_2$, and hence in the boundary of $\Delta$. Thus $z$ is either blue-red or blue-green. Now if $z$ is blue-green, then the $v$-walk $B_1$ is colourful (as it also contains the blue-red vertex $u$), and so $B_2$ is superfluous. Since by Observation~\ref{obs:supwal} this contradicts the irreducibility of $P$, we conclude that $z$ cannot be blue-green. Thus $z$ is blue-red, as claimed.

To finish the proof we note that one of the two $t$-walks contains $w,v,s$, and $z$: this is the $t$-walk highlighted with thick edges in Figure~\ref{fig:1410}. Since $w$ is blue-green and $z$ is blue-red, this $t$-walk is colourful. Thus the other $t$-walk is superfluous. In view of Observation~\ref{obs:supwal}, this contradicts the irreducibility of $P$.
\end{proof}

\vglue 0.2 cm
\noindent{(V) {\sl There is no green vertex contained in $\Delta$.}}
\vglue 0.2 cm

\begin{proof}
By way of contradiction, suppose that there is a green vertex $z$ contained in $\Delta$. We start by noting that the red straight-ahead walk $R$ lies entirely outside $\Delta$. Indeed, if some part of $R$ were contained in $\Delta$, since the blue-red vertex $u$ is outside $\Delta$ then necessarily $R$ would intersect the boundary $B_2$ of $\Delta$. In particular, $B_2$ would contain at least one blue-red vertex, contradicting (I).

Let $G_1, G_2$ be the two green $z$-walks. Since $P$ is pairwise crossing it follows that at least one of $G_1$ and $G_2$ has a green-red vertex. Without loss of generality, we may assume that $G_1$ has a red-green vertex. Since $R$ lies entirely outside $\Delta$, and $z$ is inside $\Delta$, it follows that $G_1$ must cross $B_2$. In particular, $G_1$ has at least one green-blue vertex. Thus $G_1$ has at least one green-red vertex and at least one green-blue vertex, and so it is colourful. Thus $G_2$ is a superfluous $z$-walk. By Observation~\ref{obs:supwal}, this contradicts the irreducibility of $P$.
\end{proof}

\vglue 0.3cm

\noindent{\em Conclusion of the proof of Claim~\ref{cla:eq2}.} We start by noting that no vertex can be contained in $\Delta$. Indeed, by way of contradiction, suppose that some vertex $z$ of $P$ is contained in $\Delta$. As we argued in the proof of (V), the red straight-ahead walk $R$ is contained outside $\Delta$, and so $z$ is neither red, nor red-blue, nor red-green. We know from (IV) that $v$ is the only blue vertex, and from this it follows that no blue edge can be inside $\Delta$. Therefore $z$ is neither blue, nor blue-red, nor blue-green. The only possibility left is that $z$ is green, but this is ruled out using (V). We conclude that no vertex is contained in $\Delta$.

Let $e_w$ be the green edge incident with the blue-green vertex $w$ that is contained in $\Delta$. Since no vertex is contained in $\Delta$, it follows that the other endvertex of $e_w$ is a blue-green vertex $q$ in $B_2$. See Figure~\ref{fig:1540}. We claim that $e_w$ is the only part of $P$ contained in $\Delta$. Indeed, by way of contradiction suppose that some edge $g=st$ other than $e_w$ is contained in $\Delta$. Since no vertex is contained in $\Delta$, it follows that both $s$ and $t$ are in $B_2$. Since $g\neq e_w$, it follows that neither $s$ nor $t$ is equal to $w$, contradicting (III). 

% *********************************************************
\def\te#1{{\Scale[3.4]{#1}}}
\def\tf#1{{\Scale[2.8]{#1}}}
\def\tz#1{{\Scale[2.4]{#1}}}
\begin{figure}[ht!]
\centering
\scalebox{0.3}{\input{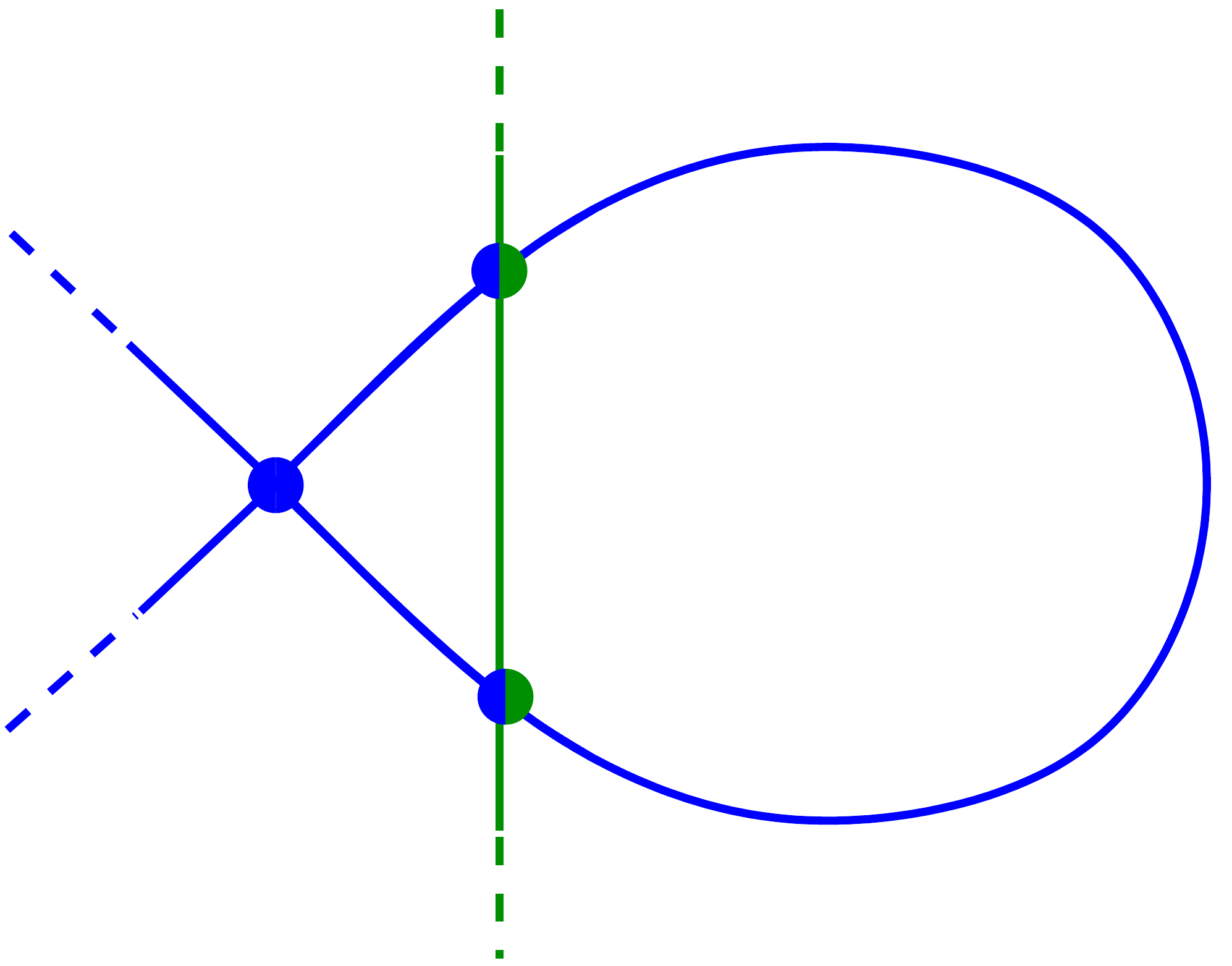_t}}
\caption{Conclusion of the proof of Claim~\ref{cla:eq2}.}
\label{fig:1540}
\end{figure}
\def\tf#1{{\Scale[2.4]{#1}}}
\def\tz#1{{\Scale[2.0]{#1}}}
% *********************************************************

Thus $e_w$ is the only part of $P$ contained in $\Delta$. As illustrated in Figure~\ref{fig:1540}, it follows that $v,q$, and $w$ are the only vertices of $B_2$, and so $B_2 + e_w$ is a $\Theta$ in $P$. But this is impossible, since by assumption $P$ is irreducible.
\end{proof}

\subsection{Proof of Lemma~\ref{lem:simple}}
As we shall see, Lemma~\ref{lem:simple} follows from three easy observations and an application of Euler's formula.

\begin{proof}[Proof of Lemma~\ref{lem:simple}]
Let $P=B\cup R\cup G$ be an irreducible projection in which all good sections have length one. That is, every good section is an edge, which we will call a {\em good} edge. Thus, for instance, a blue edge is good if and only if one of its endvertices is blue-red and the other is blue-green. We recall from Observation~\ref{obs:goodsectionsexist} that there exist at least two good sections of each colour, and so it follows that $P$ has at least two good edges of each colour.

\vglue 0.2 cm
\noindent{\bf Claim A.} {\sl $P$ does not have any monochromatic vertices.}
\vglue 0.2 cm

\begin{proof}
By symmetry it suffices to show that there are no blue vertices in $P$. By way of contradiction, suppose that $P$ has a blue vertex $u$. Let $e$ be a good blue edge. One of the two $u$-walks contains the edge $e$, and so it contains at least one blue-red vertex and at least one blue-green vertex. Therefore the $u$-walk that does not contain $e$ is superfluous. In view of Observation~\ref{obs:supwal}, this contradicts the assumption that $P$ is irreducible. 
\end{proof}

The {\em degree} $|f|$ of a face $f$ of $P$ is the number of edges in the facial cycle that bounds $f$.

\vglue 0.2 cm
%\noindent{\bf Claim B.} {\sl If $P$ has at least $8$ vertices, then it has at most two faces of degree $2$.}
\noindent{\bf Claim B.} {\sl If $P$ has at least three faces of degree $2$, then $P$ has exactly $6$ vertices.}
\vglue 0.2 cm

\begin{proof}
By way of contradiction suppose that $P$ has at least $8$ vertices, and that there exist three faces $f_1, f_2, f_3$ of $P$ that have degree $2$. Then for $i=1,2,3$ the face $f_i$ is bounded by a digon $D_i$. By Claim A we have that $P$ has no monochromatic vertices, and so each of $D_1,D_2$, and $D_3$ is a digon whose two vertices are bichromatic of the same type. 

Without loss of generality, we may assume that the vertices of $D_1$ are blue-red. In view of Observation~\ref{obs:disdig} it follows that these are the two only blue-red vertices of $P$, as otherwise $D_1$ would be a disposable digon. Once again without loss of generality we may then assume that the vertices of $D_2$ are blue-green, and again using Observation~\ref{obs:disdig} it follows that these two are the only blue-green vertices of $P$. Therefore the vertices of $D_3$ are necessarily red-green, and invoking again Observation~\ref{obs:disdig} these two are the only red-green vertices of $P$. Combining these conclusions we have that $P$ has exactly two blue-red vertices, two blue-green vertices, and two red-green vertices. Since all the vertices of $P$ are bichromatic, it follows that $P$ has exactly $6$ vertices.
\end{proof}

\vglue 0.2 cm
\noindent{\bf Claim C.} {\sl $P$ has at most three faces of degree at least $4$.}
\vglue 0.2 cm

\begin{proof}
By way of contradiction, suppose that $P$ has at least four faces of degree at least $4$. Clearly, the facial cycle of a face of degree at least $4$ contains (at least) two edges of the same colour. Thus there exists a colour $c\in\{\text{\rm blue,red,green}\}$ and two faces $f,f'$ of $P$ such that the facial cycle $C$ of $f$ contains at least two edges of colour $c$, and also the facial cycle $C'$ of $f'$ contains at least two edges of colour $c$. Without loss of generality, we may assume that $c$ is blue.

Therefore $C$ contains two blue edges $e_1,e_2$, and $C'$ contains two blue edges $e_3,e_4$. Note that $\{e_1,e_2\}$ and $\{e_3,e_4\}$ are not necessarily disjoint, but they cannot be the same set. Since $P$ has at least two good blue edges, this implies that either (i) there is a good blue edge $e$ that is not in $\{e_1,e_2\}$; or (ii) there is a good blue edge $e$ that is not in $\{e_3,e_4\}$. Without loss of generality, we may assume that (i) holds.

Recall that $e_1$ and $e_2$ are blue edges in the boundary of the same face $f$. We subdivide $e_1$ (respectively, $e_2$) with a degree $2$ vertex $x$ (respectively, $y$). One of the two $xy$-walks contains the edge $e$. Since $e$ is good, one of its endvertices is blue-red and the other one is blue-green. Therefore one of the two $xy$-walks is colourful, and so we can shortcut $P$ at $x$ and $y$. But this contradicts the assumption that $P$ is irreducible.
\end{proof}

We know from Claim A that $P$ has no monochromatic vertices, and so each of $B,R$, and $G$ is a cycle. Any two of these cycles have an even number of vertices in common. That is, there is an even number of bichromatic vertices of each type. Therefore $P$ has an even number of vertices. Since Lemma~\ref{lem:simple} claims that $P$ has exactly six vertices, in order to prove the lemma we need to show that $P$ cannot have $8$ or more vertices.

By way of contradiction, suppose that $P$ has $n\ge 8$ vertices. Since $P$ is a $4$-regular graph, it follows that $P$ has $2n$ edges. Using Euler's formula we obtain that $P$ has $n+2$ faces, which we label $f_1,f_2,\ldots,f_{n+2}$ so that $|f_1|\le |f_2| \le \cdots \le |f_{n+2}|$. Since $\sum_{i=1}^{n+2} |f_i|$ equals twice the number of edges of $P$, it follows that $\sum_{i=1}^{n+2} |f_i| = 4n$.

By Claim B we have that $|f_3|\ge 3$, and Claim C implies that $|f_{n-1}|\le 3$. Combining these observations we have that $|f_3|=|f_4|=\cdots=|f_{n-1}|=3$. Thus there are at least $n-3$ faces of degree $3$. Since $n\ge 8$, there are at least $5$ faces of degree $3$. Note that since there are no monochromatic vertices, every face of degree $3$ consists of a good blue edge, a good red edge, and a good green edge. Since every edge belongs to exactly two faces, and there are at least $5$ faces of degree $3$, we conclude that there are at least three good edges of each colour.

We claim that this implies that there cannot be any face of degree at least four. Indeed, seeking a contradiction, suppose that there is a face $f$ of degree at least four. Then $f$ has (at least) two edges $e',e''$ of the same colour, which without loss of generality we may assume to be blue. We subdivide $e'$ (respectively, $e''$) with a degree $2$ vertex $x$ (respectively, $y$). Since there are at least three good blue edges, there exists a good blue edge $e$ that is neither $e'$ nor $e''$. Since one endvertex of $e$ is blue-red and the other is blue-green, the $xy$-walk that contains $e$ is colourful. Thus we can shortcut $P$ at $x$ and $y$, contradicting the irreducibility of $P$.

Thus no face has degree at least four, and so necessarily $|f_{n}|=|f_{n+1}|=|f_{n+2}|=3$. Therefore $\sum_{i=1}^{n+2}|f_i| = |f_1| + |f_2| + \sum_{i=3}^{n+2}|f_i| \le 3 + 3 + 3\cdot n= 3n+6$. Since $\sum_{i=1}^{n+2} |f_i| = 4n$, it follows that $4n\le 3n+6$, and so $n\le 6$. This contradicts the assumption that $n\ge 8$.
\end{proof}

% *************************************************************
% *************************************************************
% *************************************************************

% *************************************************************
% *************************************************************
% *************************************************************

\section{Concluding remarks}

Taniyama's motivation in~\cite{taniyamaknots} was the investigation of a relation $\ge$ on links. Following Taniyama, given two links $L_1,L_2$, we write $L_1 \ge L_2$ if every projection of $L_1$ is also a projection of $L_2$. In this case we say that $L_2$ is a {\em minor} of $L_1$, and that $L_1$ {\em majorizes} $L_2$.

%Clearly, $\ge$ is a pre-order on the set of links. For instance, as a consequence of the characterizations in~\cite{taniyamaknots} it immediately follows that $6_2 \ge 5_2 \ge 4_1 \ge 3_1$.

The characterization given by Theorem~\ref{thm:main} determines which 3-component links majorize $L6n1$: {\em a 3-component link $L$ majorizes $L6n1$ if and only if every projection of $L$ is pairwise crossing}. Clearly, a $3$-component link $L$ satisfies that all its projections are pairwise crossing if and only if its components are pairwise linked. Thus we have the following.

\begin{observation}\label{obs:maj2}
A link $L$ majorizes $L6n1$ if and only if $L$ is a $3$-component link whose components are pairwise linked.
\end{observation}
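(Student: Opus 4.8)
The plan is to derive the observation directly from Theorem~\ref{thm:main}, together with two elementary ingredients: \textbf{(a)} the number of straight-ahead closed walks of a link projection equals the number of components of every link that projects to it; and \textbf{(b)} the purely topological equivalence, for a fixed $3$-component link $L$, between ``every projection of $L$ is pairwise crossing'' and ``no two components of $L$ form a split link''. Granting (a) and (b), the argument is short; afterwards I indicate why (b) is the only step with real content.

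First I would settle the component count. If $L$ majorizes $L6n1$, pick any regular projection $P$ of $L$; then $P$ is a projection of $L6n1$, so $P$, viewed as a $4$-regular graph on $\sphere$, is the edge-disjoint union of exactly three straight-ahead closed walks, whence by (a) $L$ has exactly three components. Conversely a link that is not $3$-component cannot majorize $L6n1$, since none of its projections has three straight-ahead closed walks and hence none is a projection of $L6n1$. Therefore ``$L$ majorizes $L6n1$'' is equivalent to ``$L$ is a $3$-component link and every projection of $L$ is a projection of $L6n1$'', which by Theorem~\ref{thm:main} is equivalent to ``$L$ is a $3$-component link and every projection of $L$ is pairwise crossing''. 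Applying (b), this is equivalent to ``$L$ is a $3$-component link whose components are pairwise linked'', as claimed.

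It remains to prove (b). For the direction ``some sublink split $\Rightarrow$ some projection not pairwise crossing'': if, say, $K_{\tblue}\cup K_{\tred}$ is split, isotope $L$ so that $K_{\tblue}$ lies in a ball whose shadow on $\sphere$ is disjoint from the shadows of $K_{\tred}$ and $K_{\tgreen}$; the resulting projection has no blue--red crossing and so is not pairwise crossing. For the converse, suppose $P=B\cup R\cup G$ is a projection of $L$ in which (say) $B$ and $R$ do not cross, i.e.\ $B\cap R=\emptyset$ as subsets of $\sphere$ (equivalently, $P$ has no blue--red vertex). Delete the green strand from a diagram of $L$ supported on $P$ to obtain a diagram of the sublink $L'=K_{\tblue}\cup K_{\tred}$ supported on $B\cup R$, with no blue--red crossings. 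The set $B$ is connected (a continuous image of a circle), so the embedding of the graph $B$ on $\sphere$ is cellular and $R$, being connected, lies in one complementary open disk $U$ of $B$; since $R$ is compact in $U$ there is a simple closed curve $\gamma\subset U$ bounding an open disk $\Delta\subset U$ with $R\subset\Delta$. By construction $\gamma$ is disjoint from $B\cup R$, that is, from the entire projection of $L'$; hence, after a suitable thickening in ${\mathbb S}^3$, $\Delta$ spans a $3$-ball that contains $K_{\tred}$ and misses $K_{\tblue}$, so its boundary $2$-sphere exhibits $L'$ as split.

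The only genuinely non-trivial step is this last paragraph: a projection in which two of the three components do not cross forces the corresponding $2$-component sublink to be split. The move that makes it clean is to pass to the $2$-component sublink and erase the third strand \emph{before} building the separating curve, so that $\gamma$ can be taken disjoint from the whole reduced projection --- otherwise the third strand could obstruct every candidate curve. One also uses, tacitly, the standard facts that every link has a regular projection, that a connected graph embedded on $\sphere$ has all complementary regions open disks, and that a simple closed curve on $\sphere$ missing the shadow of a sublink lifts to a $2$-sphere in ${\mathbb S}^3$ realizing the separation.
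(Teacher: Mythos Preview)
Your approach is exactly the paper's: just before stating the observation the paper asserts that ``clearly, a $3$-component link $L$ satisfies that all its projections are pairwise crossing if and only if its components are pairwise linked,'' and then invokes Theorem~\ref{thm:main}; your (a) and (b) merely expand what the paper takes for granted.

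There is one small slip in your justification of (b). In the direction ``some sublink split $\Rightarrow$ some projection not pairwise crossing'' you claim one can isotope $L$ so that $K_{\tblue}$ lies in a ball whose shadow is disjoint from the shadows of \emph{both} $K_{\tred}$ and $K_{\tgreen}$. That is too strong: if $K_{\tblue}$ happens to be linked with $K_{\tgreen}$ (which is certainly allowed under the hypothesis that only $K_{\tblue}\cup K_{\tred}$ is split), your own argument for the converse shows their shadows must meet in every projection. You only need, and only use, that the shadow of $K_{\tblue}$ misses that of $K_{\tred}$. For that it suffices to isotope the splitting $2$-sphere to a coordinate plane in $\mathbb{R}^3\subset\mathbb{S}^3$ and project in a direction parallel to that plane; then $B$ and $R$ land in distinct open half-planes, while $G$ may go wherever it likes. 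With this correction your argument is complete.
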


In the other direction, Theorem~\ref{thm:main} can also be used to find out which links are majorized by $L6n1$: {\em a $3$-component link $L$ is majorized by $L6n1$ if and only if every pairwise crossing projection $P$ is a projection of $L$}. It is not difficult to show that the only links (other than $L6n1$) that satisfy this property are the three links in Figure~\ref{fig:850}. Therefore:

\begin{observation}\label{obs:maj1}
The links that are majorized by $L6n1$ are $L6n1$ itself and the links in Figure~\ref{fig:850}. 
%A link  majorizes $L6n1$ if and only if it is $L6n1$ itself or one of the links in Figure~\ref{fig:850}. 
\end{observation}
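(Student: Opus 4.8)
The plan is to reduce the statement to a finite computation. First I would translate majorization through Theorem~\ref{thm:main}: since $L6n1\in\Ls{P}$ exactly when $P$ is a pairwise crossing projection of a $3$-component link, $L6n1$ majorizes a link $L$ if and only if $L\in\Ls{P}$ for every pairwise crossing projection $P$. (That $L$ is forced to be a $3$-component link needs no separate argument: any projection of $L6n1$, for instance $P_1$, is a disjoint union of three closed curves, so any link having it as a projection has three components.) Thus the set of links majorized by $L6n1$ is $\bigcap_P\Ls{P}$, the intersection taken over all pairwise crossing projections $P$.

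Next I would observe that this intersection equals $\Ls{P_1}\cap\Ls{P_2}$, where $P_1$ and $P_2$ are the two irreducible projections of Proposition~\ref{pro:twoirr} (Figure~\ref{fig:670}). The inclusion $\bigcap_P\Ls{P}\subseteq\Ls{P_1}\cap\Ls{P_2}$ is immediate, since $P_1$ and $P_2$ are pairwise crossing. For the reverse inclusion I would invoke the fact $(\dag)$ established in the proof of Theorem~\ref{thm:main}: for every pairwise crossing projection $P$, either $\Ls{P_1}\subseteq\Ls{P}$ or $\Ls{P_2}\subseteq\Ls{P}$. Hence any link in $\Ls{P_1}\cap\Ls{P_2}$ belongs to $\Ls{P}$ for every pairwise crossing $P$, so $\Ls{P_1}\cap\Ls{P_2}\subseteq\bigcap_P\Ls{P}$. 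Combining, a link is majorized by $L6n1$ if and only if it lies in $\Ls{P_1}\cap\Ls{P_2}$.

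Finally I would compute $\Ls{P_1}\cap\Ls{P_2}$ directly. Each of $P_1$ and $P_2$ has exactly six crossings, so $\Ls{P_i}$ is determined by running over the (at most $2^6$) resolutions of $P_i$ and identifying up to isotopy the resulting $3$-component link; every link obtained in this way has crossing number at most six and is therefore tabulated, so it can be named using standard invariants (or, where convenient, by direct Reidemeister simplification). Carrying this out for $P_1$ and for $P_2$ and intersecting the two lists yields exactly four links: $L6n1$ itself (as already exhibited in Figure~\ref{fig:670}) and the three links shown in Figure~\ref{fig:850}. This is the only laborious step, and the main obstacle is bookkeeping: one must resolve all the crossings of both six-crossing projections and correctly identify the resulting links. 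In practice the symmetries of $P_1$ and of $P_2$ collapse the $2^6$ resolutions into a handful of genuinely distinct cases, keeping the computation manageable.
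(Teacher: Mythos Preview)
Your proposal is correct. The paper itself does not give a proof of this observation: it only records the characterization ``a $3$-component link $L$ is majorized by $L6n1$ if and only if every pairwise crossing projection $P$ is a projection of $L$'' and then asserts that ``it is not difficult to show'' that the links satisfying this are precisely those listed. Your argument supplies exactly the details the paper omits, and does so in the natural way suggested by the paper's machinery: your reduction $\bigcap_P\Ls{P}=\Ls{P_1}\cap\Ls{P_2}$ via $(\dag)$ from the proof of Theorem~\ref{thm:main} is a clean observation that turns the question into a finite (and small) enumeration, after which the identification of the resulting links is routine.
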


% *********************************************************
\def\tz#1{{\Scale[3.0]{#1}}}
\def\tf#1{{\Scale[1.6]{#1}}}
\begin{figure}[ht!]
\centering
\scalebox{0.62}{\input{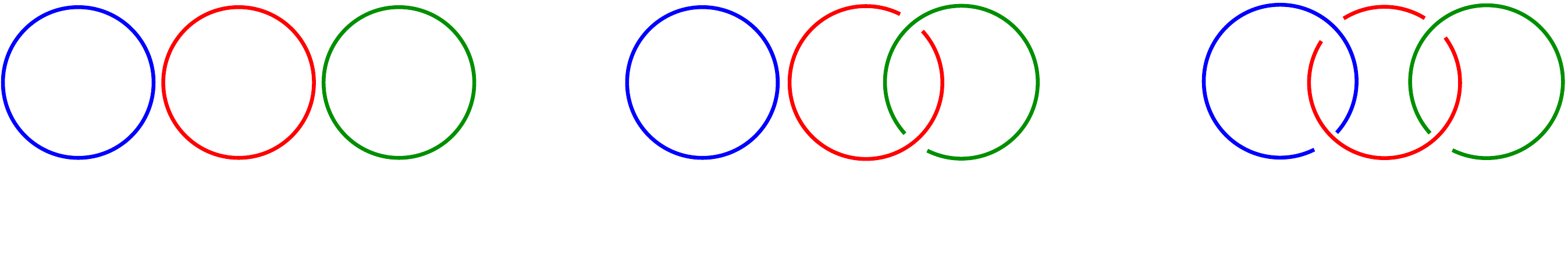_t}}
\caption{Links that are majorized by $L6n1$.}
\label{fig:850}
\end{figure}
\def\tz#1{{\Scale[2.0]{#1}}}
\def\tf#1{{\Scale[2.4]{#1}}}
% *********************************************************

\section*{Acknowledgments}

This work was supported by CONACYT under Proyecto Ciencia de Frontera 191952.

\bibliographystyle{abbrv}
\bibliography{refs.bib}

\end{document}